\newtheorem{theorem}{Theorem}[section]
\newtheorem{definition}[theorem]{Definition}
\newtheorem{proposition}[theorem]{Proposition}
\newtheorem{example}[theorem]{Example}
\newtheorem{corollary}[theorem]{Corollary}
\newtheorem{remark}[theorem]{Remark}
\def\<{\langle}
\def\>{\rangle}
\def\a{\alpha}
\def\b{\beta}
\def\d{\delta}
\def\g{\gamma}
\def\o{\otimes}
\def\r{\rho}
\def\tr{\triangleright}
\def\tl{\triangleleft}
\date{}
\begin{document}
\renewcommand{\baselinestretch}{1.2}
\renewcommand{\arraystretch}{1.0}
\title{\bf The construction of braided $T$-category via Yetter-Drinfeld-Long bimodules}
 \date{}
\author {{\bf Daowei Lu$^{1,2}$, Yan Ning$^{2}$, Dingguo Wang$^1$\footnote{Corresponding author: dgwang@qfnu.edu.cn} }\\
{\small $^1$School of Mathematical Sciences, Qufu Normal University}\\
{\small Qufu, Shandong 273165, P. R. China}\\
{\small $^2$Department of Mathematics, Jining University}\\
{\small Qufu, Shandong 273155, P. R. China}
}
\maketitle
\begin{center}
\begin{minipage}{15.cm}
\noindent{\bf Abstract.}
Let $H_1$ and $H_2$ be Hopf algebras which are not necessarily finite dimensional and $\a,\b\in Aut_{Hopf}(H_1),\g,\d\in Aut_{Hopf}(H_2)$. In this paper, we introduce a category $_{H_1}\mathcal{LR}_{H_2}(\a,\b,\g,\d)$, generalizing Yetter-Drinfeld-Long bimodules and construct a braided $T$-category $\mathcal{LR}(H_1,H_2)$ containing all the categories $_{H_1}\mathcal{LR}_{H_2}(\a,\b,\g,\d)$ as components. We also prove that if $(\a,\b,\g,\d)$ admits a quadruple in involution, then $_{H_1}\mathcal{LR}_{H_2}(\a,\b,\g,\d)$ is isomorphic to the usual category $_{H_1}\mathcal{LR}_{H_2}$ of Yetter-Drinfeld-Long bimodules.
\\

\noindent{\bf Keywords:}
Braid $T$-category; Yetter-Drinfeld-Long bimodule; Quadruple in involution.
\\

\noindent{\bf  MSC 2010:} 16T05, 18D10.
\end{minipage}
\end{center}
%\normalsize\vskip1cm

\section*{Introduction}

Braided $T$-category, introduced by Turaev in \cite{Tur00} (see also \cite{Tur94,Tur08,Tur10}), is of intense interest since it could give rise to homotopy quantum field theories. A. Kirillov \cite{Kir} found that braided $T$-categories also provide a suitable mathematical tool to describe the orbifold models which arise in the study of conformal field theories,
and play a key role in the construction of Hennings-type invariants of flat group-bundles over complements of link in the 3-sphere (see \cite{Vir}). As such they are interesting to different research communities in mathematical physics (for example, see \cite{FY1989, Kir}).

In the theory of Hopf algebra, there are mainly two methods for the construction of braided $T$-category. One is to construct Turaev group-coalgebra, the representative category of which is a braided $T$-category. For details the readers can refer to \cite{Fang13,Zhang,Wang}. The other was proposed by F. Panaite and M. Staic in their paper \cite{Pan07}, which is based on some kind of generalized Yetter-Drinfeld module of Hopf algebra with bijective antipode. Since then their method was followed and generalized to other Hopf algebra structure, such as in \cite{Liu10} to weak Hopf algebra, in \cite{Yang} to multiplier Hopf algebra, in \cite{You} to monoidal Hom-Hopf algebra, in \cite{Liu19} to weak monoidal Hom-Hopf algebra and so on. And recently in \cite{Lu2} the first author constructed a braided $T$-category by a method dual to that of \cite{Pan07}.

In \cite{Pan10} F. Panaite and F. Van Oystaeyen introduced a braided monoidal category, the category $\mathcal{LR}(H)$ of Yetter-Drinfeld-Long bimodules of a Hopf algebra $H$, which is more generalized than Yetter-Drinfeld category $^H_H\mathcal{YD}$. As we all know, a Hopf algebra in $^H_H\mathcal{YD}$ corresponds to the classic Radford biproduct, while more generally the Hopf algebra in $\mathcal{LR}(H)$ corresponds to L-R smash biproduct also defined in \cite{Pan10}. In \cite{Lu1} the first author pointed out that, for $H$ being finite dimensional, $\mathcal{LR}(H)$ is actually a Yetter-Drinfeld category.

Motivated by these results, in this paper, we will construct a more generalized braided $T$-category than the one in \cite{Pan07} via the category $_{H_1}\mathcal{LR}_{H_2}(\a,\b,\g,\d)$  of generalized Yetter-Drinfeld-Long bimodules with the Hopf algebras $H_1,H_2$( not being necessarily finite dimensional) and $\a,\b\in Aut(H_1),\g,\d\in Aut(H_2)$.

This paper is organized as follows. In section 1, we will recall the notions of crossed group category and braided $T$-category. In section 2, we will give the construction of braided $T$-category $\mathcal{LR}(H_1,H_2)$, and prove that the subcategory $\mathcal{LR}(H_1,H_2)_{fd}$ consisting of finite dimensional objects is rigid. In section 3, we will apply our construction to the case of group algebras. In section 4, we will give the isomorphism of category $_{H_1}\mathcal{LR}_{H_2}(\a,\b,\g,\d)\simeq\! _{H_1}\mathcal{LR}_{H_2}$ via a quadruple $(f_1,g_1,f_2,g_2)$ in involution corresponding to $(\a,\b,\g,\d)$.

\section{Preliminaries}
\def\theequation{1.\arabic{equation}}
\setcounter{equation} {0}

Throughout this paper, let $k$ be a fixed field, and all vector spaces and tensor product are over $k$.

Let $G$ be a group with the unit 1. Recall from \cite{Tur00} that a crossed $T$-category $\mathcal{C}$ over $G$ is given by the following data:

$\bullet $ $\mathcal{C}$ is a strict monoidal category.

$\bullet $ A family of subcategories $\{\mathcal{C_\a}\}_{\a\in G}$ such that $\mathcal{C}$ is a disjonit union of this family and that $U\o V\in\mathcal{C}_{\a\b}$ for any $\a,\b\in G$, $U\in\mathcal{C}_\a$ and $V\in\mathcal{C}_\b$.

$\bullet $ A group homomorphism $\varphi:G\rightarrow Aut(\mathcal{C}),\b\mapsto\varphi_{_\b}$, the $conjugation$, where $Aut(\mathcal{C})$ is the group of the invertible strict tensor functors from $\mathcal{C}$ to itself, such that $\varphi_{_\b}(\mathcal{C_\a})=\mathcal{C}_{\b\a\b^{-1}}$ for any $\a,\b\in G$.

We will use the Turaev's left index notation: Given $\b\in G$ and an object $V\in\mathcal{C_\a}$, the functor $\varphi_{_\b}$ will be denoted by $^{\b}(\cdot)$ or $^{V}(\cdot)$ and $^{\b^{-1}}(\cdot)$ will be denoted by $^{\overline{V}}(\cdot)$. Since $^{V}(\cdot)$ is a functor, for any object $U\in\mathcal{C}$ and any composition of morphism $g\circ f$ in $\mathcal{C}$, we obtain $^{V}id_U=id_{^{V}U}$ and $^{V}(g\circ f)=\! ^{V}g\circ\! ^{V}f$. Since the conjugation $\varphi:\pi\rightarrow aut(\mathcal{C})$ is a group homomorphism, for any $V,W\in\mathcal{C}$, we have $^{V\o W}(\cdot)=\! ^{V}(^{W}(\cdot))$ and $^{1}(\cdot)=\! ^{V}(^{\overline{V}}(\cdot))=\! ^{\overline{V}}(^{V}(\cdot))=id_{\mathcal{C}}$. Since for any $V\in\mathcal{C}$, the functor $^{V}(\cdot)$ is strict, we have $^{V}(f\o g)=\! ^{V}f\o\! ^{V}g$ for any morphism $f$ and $g$ in $\mathcal{C}$, and $^{V}(1)=1.$

A braided $T$-category is a crossed $T$-category $\mathcal{C}$ endowed with a braiding, i.e., a family of isomorphisms
$$c=\{c_{_{U,V}}:U\o V\rightarrow~ ^{V}U\o V\}_{U,V\in\mathcal{C}}
$$
obeying the following conditions:

$\bullet $ For any morphism $f\in Hom_{\mathcal{C}_\a}(U,U')$ and $g\in Hom_{\mathcal{C}_\b}(V,V')$, we have
$$(^{\a}g\o f)\circ c_{_{U,V}}=c_{_{U',V'}}\circ(f\o g),
$$

$\bullet $ For all $U,V,W\in\mathcal{C}$, we have
\begin{align}
c_{_{U\o V,W}}&=(c_{_{U,^{V}W}}\o id_V)(id_U\o c_{_{V,W}}),\label{1a}\\
c_{_{U,V\o W}}&=(id_{^{U}V}\o c_{_{U,W}})(c_{_{U,V}}\o id_{W}).\label{1b}
\end{align}

$\bullet $ For any $U,V\in\mathcal{C}$ and $\a\in G$, $\varphi_{_\a}(c_{_{U,V}})=c_{_{^{\a}U,^{\a}V}}$.

\section{The construction of $\mathcal{LR}(H_1,H_2)$}
\def\theequation{2.\arabic{equation}}
\setcounter{equation} {0}

The aim of this section is to construct the braided $T$-category $\mathcal{LR}(H_1,H_2)$. First of all, we need to give the following definition.

\begin{definition}
Let $H_1,H_2$ be two Hopf algebras, $\a,\b\in Aut_{Hopf}(H_1),\g,\d\in Aut_{Hopf}(H_2)$. The category $_{H_1}\mathcal{LR}_{H_2}(\a,\b,\g,\d)$ is defined as follows. The objects are vector spaces\\ $M$ endowed with $H_1$-$H_2$-bimodule and $H_1$-$H_2$-bicomodule
structures (denoted by $h'\o m \mapsto h'\tr m, m\o h''\mapsto m\tl h'', \r_1(m)=m_{(0)}\o m_{(1)},
\r_2(m)= m_{[-1]}\o m_{[0]}$, for all $h'\in H_1,h''\in H_2$, $m\in M$), such that $M$ is a left-right $(\a,\b)$-Yetter-Drinfeld module under $(\tr,\r_1)$, a right-left $(\g,\d)$-Yetter-Drinfeld module under$(\tl,\r_2)$, a
left and right Long module, i.e.
\begin{align}
&\a(h'_1)m_{(-1)}\o h'_2\tr m_{(0)}=(h'_1\tr m)_{(-1)}\b(h'_2)\o (h'_1\tr m)_{(0)},\label{2a}\\
&(h'\tr m)_{[0]}\o(h'\tr m)_{[1]}=h'\tr m_{[0]}\o m_{[1]} ,\label{2b}\\
&m_{[0]}\tl h''_1 \o m_{[1]}\d(h''_2)=(m\tl h''_2)_{[0]}\o \g(h''_1)(m\tl h''_2)_{[1]},\label{2c}\\
&(m\tl h'')_{(-1)}\o(m\tl h'')_{(0)}=m_{(-1)}\o m_{(0)}\tl h''.\label{2d}
\end{align}
The morphisms in $_{H_1}\mathcal{LR}_{H_2}(\a,\b,\g,\d)$ are $H_1$-$H_2$-bilinear and $H_1$-$H_2$-bicolinear. The object in $_{H_1}\mathcal{LR}_{H_2}(\a,\b,\g,\d)$ is called $(\a,\b,\g,\d)$-Yetter-Drinfeld-Long bimodule.
\end{definition}

\begin{remark}
\begin{itemize}
  \item [(1)]In the above definition, the condition (\ref{2a}) is equivalent to
\begin{equation}\label{2e}
(h'\tr m)_{(-1)}\o(h'\tr m)_{(0)}=\a(h_1)m_{(-1)}\b(S(h_3))\o h'_2\tr m_{(0)}.
\end{equation}
The condition (\ref{2c}) is equivalent to
\begin{equation}\label{2f}
(m\tl h'')_{[0]}\o(m\tl h'')_{[1]}=m_{[0]}\tl h''_2\o \g(S(h''_1))m_{[1]}\d(h_3).
\end{equation}
  \item [(2)] Denote $_{H_1}\mathcal{LR}_{H_2}=\! _{H_1}\mathcal{LR}_{H_2}(id,id,id,id)$. Obviously the category $\mathcal{LR}(H)$ introduced in \cite{Pan10} is a special case of $_{H_1}\mathcal{LR}_{H_2}$ when $H_1=H_2$.
\end{itemize}

\end{remark}

\begin{example}
Let $\a,\b\in Aut(H_1),\g,\d\in Aut(H_2)$, and assume that there exist algebra maps $f_1:H_1\rightarrow k,f_2:H_2\rightarrow k$ and group-like elements $g_1\in H_1,g_2\in H_2$ such that for all $h'\in H_1,h''\in H_2$,
\begin{align*}
&\a(h')=g_1f_1(h'_1)\b(h'_2)f_1(S(h'_3))g^{-1}_1,\\
&\d(h'')=g^{-1}_2f_2(S(h''_1))\g(h''_2)f_2(h''_3)g_2.
\end{align*}
For any vector space $V$, define
\begin{align*}
&h'\tr v=\varepsilon_1(h')v,\ v_{(-1)}\o v_{(0)}=g_1\o v,\\
&v\tl h''=\varepsilon_2(h'')v,\ v_{[0]}\o v_{[1]}=v\o g_2,
\end{align*}
for all $v\in V$. Then one can check that $V\in\! _{H_1}\mathcal{LR}_{H_2}(\a,\b,\g,\d)$. We will call $(f_1,g_1,f_2,g_2)$ a quadruple in involution corresponding to $(\a,\b,\g,\d)$.
\end{example}

\begin{proposition}
Let $M\in\! _{H_1}\mathcal{LR}_{H_2}(\a_1,\b_1,\g_1,\d_1),N\in\! _{H_1}\mathcal{LR}_{H_2}(\a_2,\b_2,\g_2,\d_2)$, then $M\o N\in\! _{H_1}\mathcal{LR}_{H_2}(\a_2\a_1,\a_2\b_1\a^{-1}_2\b_2,\g_1\g_2,\d_2\g^{-1}_2\d_1\g_2)$ with the following structure
\begin{align*}
&h'\tr(m\o n)=h'_1\tr m\o h'_2\tr n,\\
&(m\o n)_{(-1)}\o(m\o n)_{(0)}=\a_2(m_{(-1)})\a_2\b_1\a^{-1}_2(n_{(-1)})\o m_{(0)}\o n_{(0)},\\
&(m\o n)\tl h''=m\tl \g_2(h''_1)\o n\tl\g^{-1}_2\d_1\g_2(h''_2),\\
&(m\o n)_{[0]}\o(m\o n)_{[1]}=m_{[0]}\o n_{[0]}\o m_{[1]}n_{[1]}.
\end{align*}
\end{proposition}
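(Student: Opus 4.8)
The plan is to verify that the tensor product $M\o N$, with the stated structure maps, satisfies all the defining axioms of Definition 2.1 for the quadruple $(\a_2\a_1,\ \a_2\b_1\a_2^{-1}\b_2,\ \g_1\g_2,\ \d_2\g_2^{-1}\d_1\g_2)$. There are essentially three families of conditions to check: (i) the underlying bimodule/bicomodule axioms (associativity, unitality, coassociativity, counitality, and compatibility between the module and comodule structures on each side making $M\o N$ an $H_1$-$H_2$-bimodule and $H_1$-$H_2$-bicomodule); (ii) the two Yetter-Drinfeld compatibilities \eqref{2a} for $(\tr,\r_1)$ with parameters $(\a_2\a_1,\a_2\b_1\a_2^{-1}\b_2)$ and \eqref{2c} for $(\tl,\r_2)$ with parameters $(\g_1\g_2,\d_2\g_2^{-1}\d_1\g_2)$; and (iii) the two Long-module conditions \eqref{2b} and \eqref{2d}, which say that the left $H_1$-action commutes with the right $H_2$-coaction and the right $H_2$-action commutes with the left $H_1$-coaction.

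First I would dispose of (i): since the $H_1$-action on $M\o N$ is the usual diagonal one and the $H_2$-action is a twisted diagonal action $(m\o n)\tl h''=m\tl\g_2(h_1'')\o n\tl\g_2^{-1}\d_1\g_2(h_2'')$, the module axioms follow from the fact that $\g_2$ and $\g_2^{-1}\d_1\g_2$ are algebra maps and from comultiplicativity of $H_2$; dually, the $H_2$-coaction is the standard codiagonal one and the $H_1$-coaction is a twisted codiagonal coaction built from the Hopf automorphisms $\a_2$ and $\a_2\b_1\a_2^{-1}$, so coassociativity and counitality reduce to these being coalgebra maps. The mixed bimodule--bicomodule compatibilities (e.g. that $\tr$ is $H_1$-colinear appropriately, that $\tl$ is $H_2$-colinear, and the cross compatibilities between $H_1$-side and $H_2$-side) follow because on one tensor factor the relevant action and coaction already commute in $M$ and $N$ (these are Long-module-type facts), and the automorphism twists only permute which copy of the automorphism is applied.

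For (ii), I would substitute the diagonal formulas into \eqref{2a}. Consider the left-right Yetter-Drinfeld condition: one expands $h'\tr(m\o n)=h_1'\tr m\o h_2'\tr n$, applies the twisted coaction, and uses the individual YD conditions for $M$ (with $(\a_1,\b_1)$) and $N$ (with $(\a_2,\b_2)$). The key algebraic manipulation is that the automorphism $\a_2$ applied to the $M$-component's YD relation produces $\a_2\a_1(h_1')m_{(-1)}\cdots\a_2\b_1(h_2')$, while the $N$-component contributes $\a_2\b_1\a_2^{-1}$ applied to its own YD relation, i.e. $(\a_2\b_1\a_2^{-1})(\a_2(h'))$-type terms, and one checks the exponents of the automorphisms chain together so that the composite reads off $(\a_2\a_1)$ on the "incoming" leg and $(\a_2\b_1\a_2^{-1}\b_2)$ on the "outgoing" leg — this is exactly the bookkeeping that forces the second parameter to be $\a_2\b_1\a_2^{-1}\b_2$ rather than something simpler. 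The right-left YD condition \eqref{2c} is handled symmetrically, with $\g_2,\d_2$ playing the roles of $\a_2,\b_2$ but now the twist $\g_2^{-1}\d_1\g_2$ appears on the right factor; the dual/opposite bookkeeping yields $\g_1\g_2$ and $\d_2\g_2^{-1}\d_1\g_2$.

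The main obstacle — and the step I would spend the most care on — is the Long-module conditions \eqref{2b} and \eqref{2d}, because they are the conditions that link the two Hopf algebras $H_1$ and $H_2$, so one must see that the $H_1$-twists ($\a_2$) do not interfere with the $H_2$-structure and vice versa. For \eqref{2b} one computes $(h'\tr(m\o n))_{[0]}\o(h'\tr(m\o n))_{[1]}=(h_1'\tr m)_{[0]}\o(h_2'\tr n)_{[0]}\o(h_1'\tr m)_{[1]}(h_2'\tr n)_{[1]}$ and invokes \eqref{2b} for $M$ and for $N$ separately — no automorphism twist touches the $[\,\cdot\,]$-coaction on either factor, so it goes through cleanly; similarly \eqref{2d} uses that the $H_1$-coaction twist involves only $\a_2,\b_1$ (automorphisms of $H_1$) while the right $H_2$-action is untouched on the relevant legs, so \eqref{2d} for $M$ and $N$ combine. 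I expect the whole proof to be a (lengthy but mechanical) diagram-chase once the automorphism exponents are tracked correctly; I would present one representative case in full (say \eqref{2a}) and indicate that the remaining three are analogous, which is the standard convention for results of this kind.
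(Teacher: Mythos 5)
Your proposal is correct and follows essentially the same route as the paper: a direct verification of the axioms of Definition 2.1, where the bimodule/bicomodule structure is immediate from the automorphisms being Hopf algebra maps, the Yetter--Drinfeld conditions \eqref{2a} and \eqref{2c} are checked by substituting the (twisted) diagonal structures and chaining the automorphism exponents exactly as you describe, and the Long conditions \eqref{2b} and \eqref{2d} combine componentwise because they hold for arbitrary inputs on each factor. The only cosmetic difference is that the paper writes out \eqref{2a} and \eqref{2d} in full and declares \eqref{2b}, \eqref{2c} straightforward, whereas you propose to display only \eqref{2a}.
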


\begin{proof}
Clearly $M\o N$ is an $H_1$-$H_2$-bimodule and $H_1$-$H_2$-bicomodule. For all $h'\in H_1,m\in M$,
\begin{align*}
&[h'_1\tr(m\o n)]_{(-1)}\a_2\b_1\a^{-1}_2\b_2(h'_2)\o[h'_1\tr(m\o n)]_{(0)}\\
&=[h'_1\tr m\o h'_2\tr n]_{(-1)}\a_2\b_1\a^{-1}_2\b_2(h'_3)\o[h'_1\tr m\o h'_2\tr n]_{(0)}\\
&=\a_2((h'_1\tr m)_{(-1)})\a_2\b_1\a^{-1}_2((h'_2\tr n)_{(-1)})\a_2\b_1\a^{-1}_2\b_2(h'_3)\o(h'_1\tr m)_{(0)}\o(h'_2\tr n)_{(0)}\\
&=\a_2((h'_1\tr m)_{(-1)})\a_2\b_1(h'_2)\a_2\b_1\a^{-1}_2(n_{(-1)})\o(h'_1\tr m)_{(0)}\o h'_3\tr n_{(0)}\\
&=\a_2\a_1(h'_1)\a_2(m_{(-1)})\a_2\b_1\a^{-1}_2(n_{(-1)})\o h'_2\tr m_{(0)}\o h'_3\tr n_{(0)}\\
&=\a_2\a_1(h'_1)(m\o n)_{(-1)}\o h'_2\tr (m\o  n)_{(0)},
\end{align*}
which proves (\ref{2a}) and the verification of (\ref{2b}) and (\ref{2c}) is straightforward.
\begin{align*}
&[(m\o n)\tl h'']_{(-1)}\o[(m\o n)\tl h'']_{(0)}\\
&=[m\tl \g_2(h''_1)\o n\tl\g^{-1}_2\d_1\g_2(h''_2)]_{(-1)}\o[m\tl \g_2(h''_1)\o n\tl\g^{-1}_2\d_1\g_2(h''_2)]_{(0)}\\
&=\a_2[(m\tl \g_2(h''_1))_{(-1)}]\a_2\b_1\a^{-1}_2[(n\tl\g^{-1}_2\d_1\g_2(h''_2))_{(-1)}]\\
&\quad \o(m\tl \g_2(h''_1))_{(0)}\o (n\tl\g^{-1}_2\d_1\g_2(h''_2))_{(0)}\\
&=\a_2(m_{(-1)})\a_2\b_1\a^{-1}_2(n_{(-1)})\o m_{(0)}\tl \g_2(h''_1)\o n_{(0)}\tl\g^{-1}_2\d_1\g_2(h''_2)\\
&=(m\o n)_{(-1)}\o(m\o n)_{(0)}\tl h'',
\end{align*}
which proves (\ref{2d}). The proof is completed.
\end{proof}

Denote $G_i=Aut_{Hopf}(H_i)\times Aut_{Hopf}(H_i),i=1,2$.
\begin{itemize}
  \item Define multiplication of $G_1$ by
$$(\a_1,\b_1)*(\a_2,\b_2)=(\a_2\a_1,\a_2\b_1\a^{-1}_2\b_2),$$
Then $G_1$ is a group with unit $(id,id)$ and $(\a,\b)^{-1}=(\a^{-1},\a^{-1}\b^{-1}\a)$.
  \item Define multiplication of $G_2$ by
$$(\g_1,\d_1)*(\g_2,\d_2)=(\g_1\g_2,\d_2\g^{-1}_2\d_1\d_2),$$
Then $G_2$ is a group with unit $(id,id)$ and $(\g,\d)^{-1}=(\g^{-1},\g\d^{-1}\g^{-1})$.
\end{itemize}
Let $G=G_1\oplus G_2$ be the direct sum of $G_1$ and $G_2$.

With the above notations, Proposition 2.2 could be rewritten as

{\it If $M\in\! _{H_1}\mathcal{LR}_{H_2}(\a_1,\b_1,\g_1,\d_1),N\in\! _{H_1}\mathcal{LR}_{H_2}(\a_2,\b_2,\g_2,\d_2)$, then
$$M\o N\in\! _{H_1}\mathcal{LR}_{H_2}((\a_1,\b_1,\g_1,\d_1)*(\a_2,\b_2,\g_2,\d_2)).$$}
Note that if $M\in\! _{H_1}\mathcal{LR}_{H_2}(\a_1,\b_1,\g_1,\d_1),N\in\! _{H_1}\mathcal{LR}_{H_2}(\a_2,\b_2,\g_2,\d_2),$\\ $P\in\! _{H_1}\mathcal{LR}_{H_2}(\a_3,\b_3,\g_3,\d_3)$, then $(M\o N)\o P=M\o(N\o P)\in\! _{H_1}\mathcal{LR}_{H_2}((\a_1,\b_1,\g_1,\d_1)*(\a_2,\b_2,\g_2,\d_2)*(\a_3,\b_3,\g_3,\d_3))$.

\begin{proposition}
Let $N\in\! _{H_1}\mathcal{LR}_{H_2}(\a_2,\b_2,\g_2,\d_2)$ and $(\a_1,\b_1,\g_1,\d_1)\in G$. Define $^{(\a_1,\b_1,\g_1,\d_1)}N=N$ as a vector space with structures
\begin{align*}
&h'\rightharpoonup n=\b^{-1}_1\a_1(h')\tr n,\\
&n_{\<-1\>}\o n_{\<0\>}=\a^{-1}_1\a_2\b_1\a^{-1}_2(n_{(-1)})\o n_{(0)},\\
&n\leftharpoonup h''=n\tl \g^{-1}_2\d_1\g_2 \g^{-1}_1(h''),\\
&n_{\{0\}}\o n_{\{1\}}=n_{[0]}\o \g_1\d^{-1}_1(n_{[1]}).
\end{align*}
for all $h'\in H_1,h''\in H_2,n\in N$. Then
\begin{align*}
^{(\a_1,\b_1,\g_1,\d_1)}N&\in\! _{H_1}\mathcal{LR}_{H_2}(\a^{-1}_1\a_2\a_1,\a^{-1}_1\a_2\b_1\a^{-1}_2\b_2\b^{-1}_1\a_1,\g_1\g_2\g^{-1}_1,\g_1\d^{-1}_1\d_2\g^{-1}_2\d_1\g_2\g^{-1}_1)\\
&=\! _{H_1}\mathcal{LR}_{H_2}((\a_1,\b_1,\g_1,\d_1)*(\a_2,\b_2,\g_2,\d_2)*(\a_1,\b_1,\g_1,\d_1)^{-1}).
\end{align*}
\end{proposition}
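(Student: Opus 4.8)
The plan is to verify directly that the six defining conditions of an object in $_{H_1}\mathcal{LR}_{H_2}$ hold for $^{(\a_1,\b_1,\g_1,\d_1)}N$ with the prescribed parameters, namely that it is a left-right $(\a_1^{-1}\a_2\a_1,\,\a_1^{-1}\a_2\b_1\a_2^{-1}\b_2\b_1^{-1}\a_1)$-Yetter-Drinfeld module under $(\rh,\r_{\<\>})$, a right-left $(\g_1\g_2\g_1^{-1},\,\g_1\d_1^{-1}\d_2\g_2^{-1}\d_1\g_2\g_1^{-1})$-Yetter-Drinfeld module under $(\lh,\r_{\{\}})$, and satisfies the two Long-module compatibilities \eqref{2b} and \eqref{2d} in their twisted form. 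First I would record that, since $\a_1,\b_1,\g_1,\d_1$ are Hopf algebra automorphisms, the new actions $\rh,\lh$ are again module structures and the new coactions $\r_{\<\>},\r_{\{\}}$ are again comodule structures (coassociativity and counitality are preserved by applying an automorphism to the coacting factor); the bimodule/bicomodule compatibility is inherited because the $H_1$-side and $H_2$-side are untouched by each other's twist. This handles the ambient structure and reduces everything to the four displayed axioms.

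Next I would check the twisted version of \eqref{2a}. The computation is bookkeeping: substitute $h'\rh n=\b_1^{-1}\a_1(h')\tr n$ and $n_{\<-1\>}=\a_1^{-1}\a_2\b_1\a_2^{-1}(n_{(-1)})$ into the left-hand side $(\a_1^{-1}\a_2\a_1)(h'_1)\,n_{\<-1\>}\o h'_2\rh n_{\<0\>}$, pull the outer automorphism $\a_1^{-1}\a_2\b_1\a_2^{-1}$ out as a common factor after noticing $\a_1^{-1}\a_2\a_1(h'_1)=\a_1^{-1}\a_2\b_1\a_2^{-1}\big(\b_2^{-1}\a_2(\b_1^{-1}\a_1(h'_1))\big)$ is \emph{not} quite the shape we want, so instead I would apply the automorphism $\a_1^{-1}\a_2\b_1\a_2^{-1}$ to the original $(\a_2,\b_2)$-Yetter-Drinfeld identity for $N$ evaluated at $\b_1^{-1}\a_1(h')$, and then read off that the result is exactly $\big(\b_1^{-1}\a_1(h'_1)\tr n\big)_{(-1)}\b_2(\b_1^{-1}\a_1(h'_2))\o(\b_1^{-1}\a_1(h'_1)\tr n)_{(0)}$ transported through $\a_1^{-1}\a_2\b_1\a_2^{-1}$; matching the $H_1$-grouplike-free factor $\b_2(\b_1^{-1}\a_1(h'_2))$ against the desired twist $\a_1^{-1}\a_2\b_1\a_2^{-1}\b_2\b_1^{-1}\a_1(h'_2)$ confirms the claim. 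The right-left side, i.e. the twisted \eqref{2c}, is handled symmetrically using the $(\g_2,\d_2)$-Yetter-Drinfeld identity for $N$ at $\g_2^{-1}\d_1\g_2\g_1^{-1}(h'')$, pushed through $\g_1\d_1^{-1}$, together with the identity $\g_1\g_2\g_1^{-1}(h''_1)=\g_1\d_1^{-1}\big(\d_1\g_2\g_1^{-1}(h''_1)\big)$ to align the grouplike-free factor with $\g_1\d_1^{-1}\d_2\g_2^{-1}\d_1\g_2\g_1^{-1}$.

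Then I would dispatch the two Long compatibilities. For \eqref{2b}: $(h'\rh n)_{\{0\}}\o(h'\rh n)_{\{1\}}=(\b_1^{-1}\a_1(h')\tr n)_{[0]}\o\g_1\d_1^{-1}\big((\b_1^{-1}\a_1(h')\tr n)_{[1]}\big)$, and applying \eqref{2b} for $N$ gives $\b_1^{-1}\a_1(h')\tr n_{[0]}\o\g_1\d_1^{-1}(n_{[1]})=h'\rh n_{\{0\}}\o n_{\{1\}}$, as required. For \eqref{2d}: $(n\lh h'')_{\<-1\>}\o(n\lh h'')_{\<0\>}=\a_1^{-1}\a_2\b_1\a_2^{-1}\big((n\tl\g_2^{-1}\d_1\g_2\g_1^{-1}(h''))_{(-1)}\big)\o(n\tl\g_2^{-1}\d_1\g_2\g_1^{-1}(h''))_{(0)}$, and \eqref{2d} for $N$ collapses the coaction factor to $\a_1^{-1}\a_2\b_1\a_2^{-1}(n_{(-1)})\o n_{(0)}\tl\g_2^{-1}\d_1\g_2\g_1^{-1}(h'')=n_{\<-1\>}\o n_{\<0\>}\lh h''$. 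Finally I would observe that the computed quadruple equals $(\a_1,\b_1,\g_1,\d_1)*(\a_2,\b_2,\g_2,\d_2)*(\a_1,\b_1,\g_1,\d_1)^{-1}$ by plugging in the inverse formulas $(\a_1,\b_1)^{-1}=(\a_1^{-1},\a_1^{-1}\b_1^{-1}\a_1)$ and $(\g_1,\d_1)^{-1}=(\g_1^{-1},\g_1\d_1^{-1}\g_1^{-1})$ and simplifying via the group laws of $G_1$ and $G_2$. The main obstacle is purely organizational: keeping the order of composition of the automorphisms straight in the twisted Yetter-Drinfeld identities, since $G_1$ and $G_2$ have non-abelian twisted multiplications and it is easy to transpose a factor; a disciplined strategy of always applying one fixed outer automorphism to the known identity for $N$ at a pre-twisted argument, rather than manipulating both sides freely, keeps this under control.
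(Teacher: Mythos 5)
Your proposal is correct and follows essentially the same route as the paper: note that the bimodule/bicomodule structure is inherited, then verify the four compatibility axioms by applying a fixed outer automorphism to the corresponding axiom for $N$ evaluated at a pre-twisted argument. The only cosmetic difference is that the paper checks the antipode forms \eqref{2e} and \eqref{2f} while you check \eqref{2a} and \eqref{2c} directly, which is immaterial since Remark 2.2 records their equivalence.
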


\begin{proof}
Obviously $^{(\a_1,\b_1,\g_1,\d_1)}N$ is an $H_1$-$H_2$-bimodule and $H_1$-$H_2$-bicomodule. For all $h'\in H_1,n\in n$,
\begin{align*}
&(h'\rightharpoonup n)_{\<-1\>}\o(h'\rightharpoonup n)_{\<0\>}\\
&=\a^{-1}_1\a_2\b_1\a^{-1}_2[(\b^{-1}_1\a_1(h')\tr n)_{(-1)}]\o(\b^{-1}_1\a_1(h')\tr n)_{(0)}\\
&=\a^{-1}_1\a_2\b_1\a^{-1}_2[\a_2\b^{-1}_1\a_1(h'_1)n_{(-1)}\b_2\b^{-1}_1\a_1(S(h'_3))]\o\b^{-1}_1\a_1(h'_2)\tr n_{(0)}\\
&=\a^{-1}_1\a_2\a_1(h'_1)\a^{-1}_1\a_2\b_1\a^{-1}_2(n_{(-1)})\a^{-1}_1\a_2\b_1\a^{-1}_2\b_2\b^{-1}_1\a_1(S(h'_3))\o\b^{-1}_1\a_1(h'_2)\tr n_{(0)}\\
&=\a^{-1}_1\a_2\a_1(h'_1)n_{\<-1\>}\a^{-1}_1\a_2\b_1\a^{-1}_2\b_2\b^{-1}_1\a_1(S(h'_3))\o h'_2\rightharpoonup n_{\<0\>},
\end{align*}
which proves (\ref{2e}). And
\begin{align*}
&(h'\rightharpoonup n)_{\{0\}}\o(h'\rightharpoonup n)_{\{1\}}\\
&=(\b^{-1}_1\a_1(h')\tr n)_{[0]}\o\g_1\d^{-1}_1((\b^{-1}_1\a_1(h')\tr n)_{[1]})\\
&=\b^{-1}_1\a_1(h')\tr n_{[0]}\o\g_1\d^{-1}_1(n_{[1]})\\
&=h'\rightharpoonup n_{\{0\}}\o n_{\{1\}},
\end{align*}
which proves (\ref{2b}). For all $h''\in H_2$,
\begin{align*}
&(n\leftharpoonup h'')_{\{0\}}\o(n\leftharpoonup h'')_{\{1\}}\\
&=(n\tl \g^{-1}_2\d_1\g_2 \g^{-1}_1(h''))_{[0]}\o\g_1\d^{-1}_1((n\tl \g^{-1}_2\d_1\g_2 \g^{-1}_1(h''))_{[1]})\\
&=n_{[0]}\tl \g^{-1}_2\d_1\g_2 \g^{-1}_1(h''_2)\o\g_1\d^{-1}_1[\g_2S(\g^{-1}_2\d_1\g_2 \g^{-1}_1(h''_1))n_{[1]}\d_2\g^{-1}_2\d_1\g_2 \g^{-1}_1(h''_3)]\\
&=n_{[0]}\tl \g^{-1}_2\d_1\g_2 \g^{-1}_1(h''_2)\o\g_1\g_2\g^{-1}_1(S (h''_1))\g_1\d^{-1}_1(n_{[1]})\g_1\d^{-1}_1\d_2\g^{-1}_2\d_1\g_2 \g^{-1}_1(h''_3)\\
&=n_{\{0\}}\leftharpoonup h''_2\o\g_1\g_2\g^{-1}_1(S (h''_1))n_{\{1\}}\g_1\d^{-1}_1\d_2\g^{-1}_2\d_1\g_2 \g^{-1}_1(h''_3),
\end{align*}
which proves (\ref{2f}). And
\begin{align*}
&(n\leftharpoonup h'')_{\<-1\>}\o(n\leftharpoonup h'')_{\<0\>}\\
&=\a^{-1}_1\a_2\b_1\a^{-1}_2[(n\tl \g^{-1}_2\d_1\g_2 \g^{-1}_1(h''))_{(-1)}]\o(n\tl \g^{-1}_2\d_1\g_2 \g^{-1}_1(h''))_{(0)}\\
&=\a^{-1}_1\a_2\b_1\a^{-1}_2(n_{(-1)})\o n_{(0)}\tl \g^{-1}_2\d_1\g_2 \g^{-1}_1(h'')\\
&=n_{\<-1\>}\o n_{\<0\>}\leftharpoonup h'',
\end{align*}
which proves (\ref{2d}). Thus
$$^{(\a_1,\b_1,\g_1,\d_1)}N\in\! _{H_1}\mathcal{LR}_{H_2}((\a_1,\b_1,\g_1,\d_1)*(\a_2,\b_2,\g_2,\d_2)*(\a_1,\b_1,\g_1,\d_1)^{-1}).$$
\end{proof}

\begin{remark}
Let $M\in\! _{H_1}\mathcal{LR}_{H_2}(\a_1,\b_1,\g_1,\d_1),N\in\! _{H_1}\mathcal{LR}_{H_2}(\a_2,\b_2,\g_2,\d_2)$, and\\ $(\a_3,\b_3,\g_3,\d_3)\in G$. Then
$$^{(\a_1,\b_1,\g_1,\d_1)*(\a_3,\b_3,\g_3,\d_3)}N=^{(\a_1,\b_1,\g_1,\d_1)}\Big(^{(\a_3,\b_3,\g_3,\d_3)}N\Big)$$
is an object in $$_{H_1}\mathcal{LR}_{H_2}((\a_1,\b_1,\g_1,\d_1)*(\a_3,\b_3,\g_3,\d_3)*(\a_2,\b_2,\g_2,\d_2)*(\a_3,\b_3,\g_3,\d_3)^{-1}*(\a_1,\b_1,\g_1,\d_1)^{-1}).$$
And
$$^{(\a_3,\b_3,\g_3,\d_3)}(M\o N)=\!^{(\a_3,\b_3,\g_3,\d_3)}M\o\!^{(\a_3,\b_3,\g_3,\d_3)}N$$
is an object in $_{H_1}\mathcal{LR}_{H_2}((\a_3,\b_3,\g_3,\d_3)*(\a_1,\b_1,\g_1,\d_1)*(\a_2,\b_2,\g_2,\d_2)*(\a_3,\b_3,\g_3,\d_3)^{-1})$.
\end{remark}

\begin{proposition}
Let $M\in\! _{H_1}\mathcal{LR}_{H_2}(\a_1,\b_1,\g_1,\d_1),N\in\! _{H_1}\mathcal{LR}_{H_2}(\a_2,\b_2,\g_2,\d_2)$. Denote $^MN=\! ^{(\a_1,\b_1,\g_1,\d_1)}N$. Define the map
$$c_{M,N}:M\o N\rightarrow\! ^MN\o M,\ \ c_{M,N}(m\o n)=\b^{-1}_1(m_{(-1)})\tr n_{[0]}\o m_{(0)}\tl \d^{-1}_1(n_{[1]}).$$
Then $c_{M,N}$ is $H_1$-$H_2$-bilinear and $H_1$-$H_2$-bicolinear with the inverse
$$c^{-1}_{M,N}:\! ^MN\o M\rightarrow M\o N,\ \ c^{-1}_{M,N}(n\o m)=m_{(0)}\tl \d^{-1}_1(S^{-1}_2(n_{[1]}))\o\b^{-1}_1(S^{-1}_1(m_{(-1)}))\tr n_{[0]}.$$
Moreover $c_{M,N}$ satisfies relations (\ref{1a}) and (\ref{1b})
and for $P\in\! _{H_1}\mathcal{LR}_{H_2}(\a_3,\b_3,\g_3,\d_3)$,
$$c_{_{^PM,^PN}}=c_{M,N}.$$
\end{proposition}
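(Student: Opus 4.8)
The plan is to verify in turn each of the four assertions about $c_{M,N}$: that it is a morphism in $_{H_1}\mathcal{LR}_{H_2}$ (i.e. $H_1$-$H_2$-bilinear and bicolinear), that the stated formula for $c^{-1}_{M,N}$ is indeed a two-sided inverse, that $c$ satisfies the hexagon-type relations \eqref{1a} and \eqref{1b}, and finally that $c$ is invariant under the conjugation functors, $c_{{}^PM,{}^PN}=c_{M,N}$.

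First I would check bilinearity and bicolinearity. For left $H_1$-linearity one applies $c_{M,N}$ to $h'\tr(m\o n)=h'_1\tr m\o h'_2\tr n$ and uses the left-right $(\a_1,\b_1)$-Yetter-Drinfeld condition \eqref{2e} for $M$ together with the left Long condition \eqref{2b} for $N$ to move the $h'$-factors to the outside; the automorphisms $\a_1,\b_1$ and the twist by $\b^{-1}_1$ in the definition of $c$ are arranged precisely so the $\a_1,\b_1$ cancel. Right $H_2$-linearity is symmetric, using the right-left $(\g_1,\d_1)$-Yetter-Drinfeld condition \eqref{2f} for $N$ and the right Long condition \eqref{2d} for $M$. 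Left and right colinearity are checked against the (twisted) coactions on $^MN\o M$ coming from Proposition 2.2 and Proposition 2.4: one computes $\r$ applied to $c_{M,N}(m\o n)$ and matches it with $(c_{M,N}\o\mathrm{id})$ applied to the coaction on $M\o N$, again invoking \eqref{2a}--\eqref{2d}. That the displayed $c^{-1}_{M,N}$ is inverse to $c_{M,N}$ is a direct computation collapsing $S_i$ against the appropriate leg using the module/comodule axioms and the antipode relations; here bijectivity of the antipodes of $H_1,H_2$ is used, which is why $S^{-1}_1,S^{-1}_2$ appear.

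Next I would verify \eqref{1a} and \eqref{1b}. For \eqref{1b}, $c_{M,N\o P}$, both sides are maps $M\o N\o P\to{}^M(N\o P)\o M$; I would expand the left side using the tensor-product coaction $(m)_{(-1)}$ stays the same but the $H_2$-coaction on $N\o P$ is $n_{[0]}\o p_{[0]}\o n_{[1]}p_{[1]}$, and the right side as a composite $(\mathrm{id}_{{}^MN}\o c_{M,P})(c_{M,N}\o\mathrm{id}_P)$, then reconcile the two by coassociativity of $\r_2$ on $M$ and multiplicativity of the $H_2$-action; the twists by $\g_1,\d_1$ in $^M(\cdot)$ match because of how the conjugation is defined in Proposition 2.4. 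Relation \eqref{1a}, $c_{M\o N,P}$, is dual: one uses that the $H_1$-coaction on $M\o N$ is $\a_2(m_{(-1)})\a_2\b_1\a^{-1}_2(n_{(-1)})$ (Proposition 2.2), and one must check that $^{M\o N}P={}^M({}^NP)$ as used in the right side $(c_{M,{}^NP}\o\mathrm{id}_N)(\mathrm{id}_M\o c_{N,P})$; this is exactly the composability of conjugations recorded after Proposition 2.4 and in Remark 2.6. Finally, $c_{{}^PM,{}^PN}=c_{M,N}$: here the underlying vector spaces and the formula are literally the same, so one only needs to observe that replacing $(\a_1,\b_1,\g_1,\d_1)$ by $(\a_3,\b_3,\g_3,\d_3)*(\a_1,\b_1,\g_1,\d_1)*(\a_3,\b_3,\g_3,\d_3)^{-1}$ and the structure maps $\tr,\r_1,\ldots$ by their conjugated versions leaves the expression $\b^{-1}_1(m_{(-1)})\tr n_{[0]}\o m_{(0)}\tl\d^{-1}_1(n_{[1]})$ unchanged, because the conjugating automorphisms insert and then immediately remove the same factors.

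The routine but lengthy part is the bilinearity/bicolinearity verification, where the bookkeeping of the four automorphisms $\a_1,\b_1,\g_1,\d_1$ (and their conjugates) is heavy. The genuine obstacle, and the step I would be most careful with, is \eqref{1a}: one must get right simultaneously (i) the twisted $H_1$-coaction on $M\o N$ from Proposition 2.2, (ii) the identification $^{M\o N}P={}^M({}^NP)$ from the group structure on $G$, and (iii) the interaction of the $\b^{-1}$-twist in $c$ with the $\a_2$-twist appearing in the $M\o N$ coaction; a small misalignment of automorphisms here is the likeliest source of error, so I would track each leg explicitly through both sides before cancelling.
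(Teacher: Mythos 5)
Your proposal follows essentially the same route as the paper: direct verification of each linearity/colinearity property using (\ref{2a})--(\ref{2f}), direct expansion of both sides of (\ref{1a}) and (\ref{1b}) via the tensor-product structures of Proposition 2.4 and the conjugation of Proposition 2.6, and explicit cancellation of the conjugating automorphisms for $c_{_{^PM,^PN}}=c_{M,N}$; your attributions of which axiom enters at which step (e.g.\ (\ref{2e}) for $M$ with (\ref{2b}) for $N$ in left $H_1$-linearity, (\ref{2f}) for $N$ with (\ref{2d}) for $M$ in right $H_2$-linearity) all check out against the paper's computations. The only differences are cosmetic: you additionally sketch the inverse verification, which the paper omits, and you would write out (\ref{1b}) where the paper writes out (\ref{1a}) and declares the other ``similar.''
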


\begin{proof}
It is easy to verify that $c_{M,N}$ is left $H_1$-linear and right $H_2$-colinear.
%\begin{align*}
%&c_{M,N}(h'\tr(m\o n))=c_{M,N}(h'_1\tr m\o h'_2\tr n)\\
%&=\b^{-1}_1((h'_1\tr m)_{(-1)})\tr(h'_2\tr n)_{[0]}\o (h'_1\tr m)_{(0)}\tl \d^{-1}_1((h'_2\tr n)_{[1]})\\
%&=\b^{-1}_1\a_1(h'_1)\b^{-1}_1(m_{(-1)})\tr n_{[0]}\o h'_2\tr m_{(0)}\tl \d^{-1}_1(n_{[1]}),
%\end{align*}
%and
%\begin{align*}
%&h'\tr c_{M,N}(m\o n)\\
%&=h'\tr(\b^{-1}_1(m_{(-1)})\tr n_{[0]}\o m_{(0)}\tl \d^{-1}_1(n_{[1]}))\\
%&=h'_1\rightharpoonup (\b^{-1}_1(m_{(-1)})\tr n_{[0]})\o h'_2\tr m_{(0)}\tl y(n_{[1]})\\
%&=\b^{-1}_1\a_1(h'_1)\b^{-1}_1(m_{(-1)})\tr n_{[0]}\o h'_2\tr m_{(0)}\tl \d^{-1}_1(n_{[1]})
%\end{align*}
For all $m\in M,n\in N$ and $h''\in H_2$,
\begin{align*}
&c_{M,N}((m\o n)\tl h'')\\
&=c_{M,N}(m\tl \g_2(h''_1)\o n\tl \g^{-1}_2\d_1\g_2(h''_2))\\
&=\b^{-1}_1(m\tl \g_2(h''_1))_{(-1)})\tr(n\tl \g^{-1}_2\d_1\g_2(h''_2))_{[0]}\o (m\tl \g_2(h''_1))_{(0)}\tl \d^{-1}_1(n\tl \g^{-1}_2\d_1\g_2(h''_2))_{[1]})\\
&=\b^{-1}_1(m_{(-1)})\tr n_{[0]}\tl \g^{-1}_2\d_1\g_2(h''_1)\o m_{(0)}\tl \d^{-1}_1(n_{[1]}) \d^{-1}_1\d_2\g^{-1}_2\d_1\g_2(h''_2),
\end{align*}
and
\begin{align*}
&c_{M,N}(m\o n)\tl h''\\
&=(\b^{-1}_1(m_{(-1)})\tr n_{[0]}\o m_{(0)}\tl \d^{-1}_1(n_{[1]}))\tl h''\\
&=(\b^{-1}_1(m_{(-1)})\tr n_{[0]})\leftharpoonup \g_1(h''_1)\o m_{(0)}\tl \d^{-1}_1(n_{[1]})\g^{-1}_1\g_1\d^{-1}_1\d_2\g^{-1}_2\d_1\g_2\g^{-1}_1\g_1(h''_2)\\
&=\b^{-1}_1(m_{(-1)})\tr n_{[0]}\tl \g^{-1}_2\d_1\g_2(h''_1)\o m_{(0)}\tl \d^{-1}_1(n_{[1]})\d^{-1}_1\d_2\g^{-1}_2\d_1\g_2(h''_2).
\end{align*}
Thus $c_{M,N}$ is right $H_2$-linear. Since
\begin{align*}
&c_{M,N}(m\o n)_{(-1)}\o c_{M,N}(m\o n)_{(0)}\\
&=(\b^{-1}_1(m_{(-1)})\tr n_{[0]}\o m_{(0)}\tl \d^{-1}_1(n_{[1]}))_{(-1)}\o(\b^{-1}_1(m_{(-1)})\tr n_{[0]}\o m_{(0)}\tl \d^{-1}_1(n_{[1]}))_{(0)}\\
&=\a_1((\b^{-1}_1(m_{(-1)})\tr n_{[0]})_{\<-1\>})\a_2\b_1\a^{-1}_2\b_2\b^{-1}_1((m_{(0)}\tl \d^{-1}_1(n_{[1]}))_{(-1)})\\
&\quad\o(\b^{-1}_1(m_{(-1)})\tr n_{[0]})_{\<0\>}\o(m_{(0)}\tl \d^{-1}_1(n_{[1]}))_{(0)}\\
%&=\a_2(m_{(-1)1})\a_2\b_1\a^{-1}_2(n_{[0](-1)})\a_2\b_1\a^{-1}_2\b_2\b^{-1}_1(S(m_{(-1)3}))\a_2\b_1\a^{-1}_2\b_2\b^{-1}_1(m_{(0)(-1)})\\
%&\quad\o\b^{-1}_1(m_{(-1)2})\tr n_{[0](0)}\o m_{(0)(0)}\tl \d^{-1}_1(n_{[1]})\\
&=\a_2(m_{(-1)1})\a_2\b_1\a^{-1}_2(n_{[0](-1)})\a_2\b_1\a^{-1}_2\b_2\b^{-1}_1(S(m_{(-1)3}))\a_2\b_1\a^{-1}_2\b_2\b^{-1}_1(m_{(-1)4})\\
&\quad\o\b^{-1}_1(m_{(-1)2})\tr n_{[0](0)}\o m_{(0)}\tl \d^{-1}_1(n_{[1]})\\
&=\a_2(m_{(-1)})\a_2\b_1\a^{-1}_2(n_{(-1)})\o \b^{-1}_1(m_{(0)(-1)})\tr n_{(0)[0]}\o m_{(0)(0)}\tl \d^{-1}_1(n_{(0)[1]})\\
&=\a_2(m_{(-1)})\a_2\b_1\a^{-1}_2(n_{(-1)})\o c_{M,N}(m_{(0)}\o n_{(0)})\\
&=(m\o n)_{(-1)}\o c_{M,N}((m\o n)_{(0)}),
\end{align*}
%\begin{align*}
%&c_{M,N}(m\o n)_{[0]}\o c_{M,N}(m\o n)_{[1]}\\
%&=(\b^{-1}_1(m_{(-1)})\tr n_{[0]}\o m_{(0)}\tl \d^{-1}_1(n_{[1]}))_{[0]}\o(\b^{-1}_1(m_{(-1)})\tr n_{[0]}\o m_{(0)}\tl \d^{-1}_1(n_{[1]}))_{[1]}\\
%&=(\b^{-1}_1(m_{(-1)})\tr n_{[0]})_{\{0\}}\o (m_{(0)}\tl \d^{-1}_1(n_{[1]}))_{[0]}\o(\b^{-1}_1(m_{(-1)})\tr n_{[0]})_{\{1\}}( m_{(0)}\tl \d^{-1}_1(n_{[1]}))_{[1]}\\
%&=\b^{-1}_1(m_{(-1)})\tr n_{[0]}\o (m_{(0)}\tl \d^{-1}_1(n_{[1]2}))_{[0]}\o \g_1\d^{-1}_1(n_{[1]1})( m_{(0)}\tl \d^{-1}_1(n_{[1]2}))_{[1]}\\
%&=\b^{-1}_1(m_{(-1)})\tr n_{[0]}\o m_{(0)[0]}\tl \d^{-1}_1(n_{[1]3})\o \g_1\d^{-1}_1(n_{[1]1})\g_1\d^{-1}_1(S(n_{[1]2}))m_{(0)[1]} \d_1\d^{-1}_1(n_{[1]4})\\
%&=\b^{-1}_1(m_{(-1)})\tr n_{[0]}\o m_{(0)[0]}\tl \d^{-1}_1(n_{[1]1})\o m_{(0)[1]}n_{[1]2}\\
%&=\b^{-1}_1(m_{[0](-1)})\tr n_{[0][0]}\o m_{[0](0)}\tl \d^{-1}_1(n_{[0][1]})\o m_{[1]}n_{[1]}\\
%&=c_{M,N}(m_{[0]}\o n_{[0]})\o m_{[1]}n_{[1]}\\
%&=c_{M,N}((m\o n)_{[0]})\o (m\o n)_{[1]}
%\end{align*}
we obtain that $c_{M,N}$ is left $H_1$-colinear. For all $M\in\! _{H_1}\mathcal{LR}_{H_2}(\a_1,\b_1,\g_1,\d_1),N\in\! _{H_1}\mathcal{LR}_{H_2}(\a_2,\b_2,\g_2,\d_2), P\in\! _{H_1}\mathcal{LR}_{H_2}(\a_3,\b_3,\g_3,\d_3)$ and all $m\in M,n\in N,p\in P$, on one hand,
\begin{align*}
&c_{M\o N,P}(m\o n\o p)\\
&=(\a_2\b_1\a^{-1}_2\b_2)^{-1}((m\o n)_{(-1)})\tr p_{[0]}\o(m\o n)_{(0)}\tl\d_2\g^{-1}_2\d_1\g_2(p_{[1]})\\
&=(\a_2\b_1\a^{-1}_2\b_2)^{-1}(\a_2(m_{(-1)})\a_2\b_1\a^{-1}_2(n_{(-1)}))\tr p_{[0]}\o (m_{(0)}\o n_{(0)})\tl\d_2\g^{-1}_2\d_1\g_2(p_{[1]})\\
&=\b^{-1}_2\a_2\b^{-1}_1(m_{(-1)})\b^{-1}_2(n_{(-1)})\tr p_{[0]}\o m_{(0)}\tl \d^{-1}_1\g_2\d^{-1}_2(p_{[1]1})\o n_{(0)}\tl\d^{-1}_2(p_{[1]2}).
\end{align*}
On the other hand,
\begin{align*}
&(c_{M,\! ^NP}\o id)(id\o c_{N,P})(m\o n\o p)\\
&=(c_{M,\! ^NP}\o id)(m\o \b^{-1}_2(n_{(-1)})\tr p_{[0]}\o n_{(0)}\tl \d^{-1}_2(p_{[1]}))\\
&=\b^{-1}_1(m_{(-1)})\rightharpoonup(\b^{-1}_2(n_{(-1)})\tr p_{[0]})_{\{0\}}\o m_{(0)}\tl\d^{-1}_1((\b^{-1}_2(n_{(-1)})\tr p_{[0]})_{\{1\}})\o n_{(0)}\tl \d^{-1}_2(p_{[1]})\\
&=\b^{-1}_2\a_2\b^{-1}_1(m_{(-1)})\b^{-1}_2(n_{(-1)})\tr p_{[0]}\o m_{(0)}\tl\d^{-1}_1\g_2\d^{-1}_2(p_{[0][1]})\o n_{(0)}\tl \d^{-1}_2(p_{[1]})\\
&=\b^{-1}_2\a_2\b^{-1}_1(m_{(-1)})\b^{-1}_2(n_{(-1)})\tr p_{[0]}\o m_{(0)}\tl\d^{-1}_1\g_2\d^{-1}_2(p_{[1]1})\o n_{(0)}\tl \d^{-1}_2(p_{[1]2}),
\end{align*}
which implies the identity (\ref{1a}). Similarly we could verify the identity (\ref{1b}).
%\begin{align*}
%&c_{M,N\o P}(m\o n\o p)\\
%&=\b^{-1}_1(m_{(-1)})\tr(n\o p)_{[0]}\o m_{(0)}\tl\d^{-1}_1((n\o p)_{[1]})\\
%&=\b^{-1}_1(m_{(-1)})\tr(n_{[0]}\o p_{[0]})\o m_{(0)}\tl\d^{-1}_1(n_{[1]}p_{[1]})\\
%\end{align*}
%
%
%\begin{align*}
%&(id\o c_{M,P})(c_{M,N}\o id)(m\o n\o p)\\
%&=(id\o c_{M,P})(\b^{-1}_1(m_{(-1)})\tr n_{[0]}\o m_{(0)}\tl \d^{-1}_1(n_{[1]})\o p)\\
%&=\b^{-1}_1(m_{(-1)})\tr n_{[0]}\o \b^{-1}_1((m_{(0)}\tl \d^{-1}_1(n_{[1]}))_{(-1)})\tr p_{[0]}\o(m_{(0)}\tl \d^{-1}_1(n_{[1]}))_{(0)}\tl\d^{-1}_1(p_{[1]})\\
%&=\b^{-1}_1(m_{(-1)})\tr n_{[0]}\o \b^{-1}_1(m_{(0)(-1)})\tr p_{[0]}\o m_{(0)(0)}\tl \d^{-1}_1(n_{[1]})\d^{-1}_1(p_{[1]})\\
%\end{align*}
For the last statement,
\begin{align*}
&c_{_{^PM,^PN}}(m\o n)\\
&=\a^{-1}_3\b_3\b^{-1}_1\a_1\b^{-1}_3\a^{-1}_1\a_3(m_{\<-1\>})\rightharpoonup n_{\{0\}}\o m_{\<0\>}\leftharpoonup\g_3\g_1\d^{-1}_3\g_1\d^{-1}_1\d_3\g^{-1}_3(n_{\{1\}})\\
&=\a^{-1}_3\b_3\b^{-1}_1(m_{(-1)})\rightharpoonup n_{[0]}\o m_{(0)}\leftharpoonup\g_3\g_1\d^{-1}_3\g_1\d^{-1}_1(n_{[1]})\\
&=\b^{-1}_1(m_{(-1)})\tr n_{[0]}\o m_{(0)}\tl\d^{-1}_1(n_{[1]})\\
&=c_{M,N}(m\o n).
\end{align*}
The proof is completed.
\end{proof}

Let $\mathcal{LR}(H_1, H_2)$ be the disjoint union of all $ _{H_1}\mathcal{LR}_{H_2}(\a,\b,\g,\d)$, with $(\a,\b,\g,\d)\in G=G_1\oplus G_2$. Endowed with the tensor product in Proposition 2.2, $\mathcal{LR}(H_1, H_2)$ becomes a strict monoidal category
with a unit $k$ as an object in $ _{H_1}\mathcal{LR}_{H_2}(id,id,id,id)$.

The group homomorphism $\varphi:G\rightarrow Aut(\mathcal{LR}(H_1, H_2)),\ (\a,\b,\g,\d)\mapsto\varphi_{(\a,\b,\g,\d)}$, is given on components as
\begin{align*}
\varphi_{(\a_1,\b_1,\g_1,\d_1)}:\! _{H_1}\mathcal{LR}_{H_2}&(\a_2,\b_2,\g_2,\d_2)\rightarrow\\
&\rightarrow \! _{H_1}\mathcal{LR}_{H_2}((\a_1,\b_1,\g_1,\d_1)*(\a_2,\b_2,\g_2,\d_2)*(\a_1,\b_1,\g_1,\d_1)^{-1}),\\
&\varphi_{(\a_1,\b_1,\g_1,\d_1)}(N)=\! ^{(\a_1,\b_1,\g_1,\d_1)}N,
\end{align*}
and the functor acts on morphisms as identity. The braiding in $\mathcal{LR}(H_1, H_2)$ is given by the the family $c=\{c_{M,N}\}$. By the above results, we have the following theorem.

\begin{theorem}
$\mathcal{LR}(H_1, H_2)$ is a braided $T$-category over $G$.
\end{theorem}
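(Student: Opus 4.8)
The plan is to assemble the braided $T$-category structure on $\mathcal{LR}(H_1,H_2)$ from the pieces already established in Propositions 2.2–2.6, checking each axiom in the definition of Section 1 in turn. First I would verify that $\mathcal{LR}(H_1,H_2)$ is a strict monoidal category: the objects are the disjoint union of the $_{H_1}\mathcal{LR}_{H_2}(\a,\b,\g,\d)$ over $(\a,\b,\g,\d)\in G$, the tensor product of objects is given by Proposition 2.2 (which shows it lands in the component indexed by the product in $G$), the tensor product of morphisms is the usual one on underlying vector spaces, and the unit object is $k$ in $_{H_1}\mathcal{LR}_{H_2}(id,id,id,id)$. Associativity of $\o$ on objects is strict because, as noted right after Proposition 2.2, both $(M\o N)\o P$ and $M\o(N\o P)$ carry literally the same bimodule/bicomodule structure and sit in the component indexed by $(\a_1,\b_1,\g_1,\d_1)*(\a_2,\b_2,\g_2,\d_2)*(\a_3,\b_3,\g_3,\d_3)$, using associativity of the group law on $G$; one checks the unit constraints are strict in the same elementary way.

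Next I would check that $\{\varphi_{(\a,\b,\g,\d)}\}$ is a crossed structure. Proposition 2.4 shows $\varphi_{(\a_1,\b_1,\g_1,\d_1)}$ sends $_{H_1}\mathcal{LR}_{H_2}(\a_2,\b_2,\g_2,\d_2)$ to the component conjugated by $(\a_1,\b_1,\g_1,\d_1)$ in $G$, which is exactly the condition $\varphi_{\b}(\mathcal{C}_\a)=\mathcal{C}_{\b\a\b^{-1}}$. That each $\varphi_{(\a,\b,\g,\d)}$ is a strict tensor functor and that $\varphi$ is a group homomorphism ($\varphi_{x*y}={}^{x}({}^{y}(\cdot))$, $\varphi_{e}=id$) are the content of Remark 2.5: there one sees $^{x*y}N=\,^{x}(^{y}N)$ as objects and $^{x}(M\o N)=\,^{x}M\o\,^{x}N$; invertibility of $\varphi_{(\a,\b,\g,\d)}$ follows since $G$ is a group, with inverse $\varphi_{(\a,\b,\g,\d)^{-1}}$. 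I would record these as the verification of the three bullets defining a crossed $T$-category, noting that the functor acts as the identity on morphisms so functoriality is immediate.

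Finally I would verify the braiding axioms for $c=\{c_{M,N}\}$. Proposition 2.6 already supplies: that each $c_{M,N}\colon M\o N\to\,^M N\o M$ is an isomorphism in the category (it is $H_1$-$H_2$-bilinear and $H_1$-$H_2$-bicolinear, with explicit inverse), that the hexagon identities \eqref{1a} and \eqref{1b} hold, and that $c_{\,^P M,\,^P N}=c_{M,N}$, which is precisely the last bullet $\varphi_{\a}(c_{U,V})=c_{\,^{\a}U,\,^{\a}V}$ since $\varphi_\a$ is the identity on morphisms. The only braiding axiom not stated verbatim in Proposition 2.6 is naturality: for $f\in Hom_{\mathcal{C}_{(\a_1,\b_1,\g_1,\d_1)}}(M,M')$ and $g\in Hom(N,N')$ one needs $(\,^{(\a_1,\b_1,\g_1,\d_1)}g\o f)\circ c_{M,N}=c_{M',N'}\circ(f\o g)$; this is a short direct computation using that $f$ is bicolinear (so it intertwines $m_{(-1)}\o m_{(0)}$) and $g$ is bilinear (so it intertwines $n_{[0]}\o n_{[1]}$), together with the fact that $^{(\a_1,\b_1,\g_1,\d_1)}g$ is $g$ as a linear map. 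Collecting all of this gives the theorem.

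I do not anticipate a genuine obstacle, since all the real work is in Propositions 2.2, 2.4 and 2.6; the main point requiring care is purely bookkeeping, namely confirming that the various composite quadruples of automorphisms arising from conjugation and from the tensor product agree on the nose with the group operations on $G_1$ and $G_2$ as defined (in particular the slightly unusual formulas $(\a,\b)^{-1}=(\a^{-1},\a^{-1}\b^{-1}\a)$ and $(\g,\d)^{-1}=(\g^{-1},\g\d^{-1}\g^{-1})$, and the twisted multiplication in $G_2$). So the expected hardest step is simply checking associativity and the homomorphism property at the level of these quadruples consistently; everything else is assembly.
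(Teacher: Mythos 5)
Your proposal is correct and follows essentially the same route as the paper, which simply assembles the theorem from Propositions 2.2, 2.4, 2.6 and Remark 2.5 without further argument. Your explicit mention of the naturality of the braiding (and of the strict associativity bookkeeping on the quadruples) actually spells out steps the paper leaves implicit, but the underlying approach is identical.
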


Suppose $H$ to be a finite dimensional Hopf algebra. In \cite{Lu1} it is shown that $\mathcal{LR}(H)\simeq\! _{H\o H^*}^{H\o H^*}\mathcal{YD}$ as braided monoidal categories. By a similar process, if $H_1$ and $H_2$ are finite dimensional, we have $_{H_1}\mathcal{LR}_{H_2}\simeq\! _{H_1\o H^*_2}^{H_1\o H^*_2}\mathcal{YD}$ as braided monoidal categories.

Just as our construction, let $\mathcal{YD}(H_1\o H^*_2)$ be the braided $T$-category over the group $Aut(H_1\o H^*_2)$. Then we have the following result.
\begin{proposition}
Let $H_1$ and $H_2$ be finite dimensional Hopf algebras. Then $\mathcal{LR}(H_1, H_2)\simeq\mathcal{YD}(H_1\o H^*_2)$ as braided $T$-categories.
\end{proposition}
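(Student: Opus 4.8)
The plan is to upgrade the known braided monoidal equivalence $_{H_1}\mathcal{LR}_{H_2}\simeq {}_{H_1\o H_2^*}^{H_1\o H_2^*}\mathcal{YD}$ to an equivalence of the whole $T$-categories, component by component, compatibly with the group structures. First I would set up the group-level picture: write $K=H_1\o H_2^*$, so that $\mathcal{YD}(K)$ is the braided $T$-category over $Aut_{Hopf}(K)\times Aut_{Hopf}(K)$ built exactly as in Section 2 but from a single Hopf algebra. The key observation is that $Aut_{Hopf}(H_1)\times Aut_{Hopf}(H_2)$ embeds into $Aut_{Hopf}(K)$ via $(\a,\g)\mapsto \a\o(\g^{-1})^*$ (the dual of an automorphism of $H_2$ is an automorphism of $H_2^*$, and we twist by inverse so that the assignment is a group homomorphism rather than an anti-homomorphism); hence $G=G_1\oplus G_2$ maps into $Aut_{Hopf}(K)\times Aut_{Hopf}(K)$ by sending $(\a,\b,\g,\d)$ to $\big(\a\o(\g^{-1})^*,\ \b\o(\d^{-1})^*\big)$. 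I would check this is a group homomorphism, comparing the product in $G_1$ resp. $G_2$ defined right after Proposition 2.2 with the product in $Aut_{Hopf}(K)\times Aut_{Hopf}(K)$; the twist by $\g$ inside the formula $(\g_1,\d_1)*(\g_2,\d_2)=(\g_1\g_2,\d_2\g_2^{-1}\d_1\d_2)$ is precisely what makes this work, since conjugation on $H_2^*$ reverses order.

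Next I would construct the functor $\Phi:\mathcal{LR}(H_1,H_2)\to\mathcal{YD}(K)$ on each component. On $_{H_1}\mathcal{LR}_{H_2}(\a,\b,\g,\d)$ one uses the classical recipe: given $M$ with its $H_1$-$H_2$-bimodule/bicomodule and Long-compatibility data, turn the right $H_2$-action into a left $H_2^*$-coaction and the left $H_2^*$-action (coming from the right $H_2$-coaction) into a structure so that $M$ becomes a left $K$-module and left $K$-comodule. Concretely, $k\o f\in K=H_1\o H_2^*$ acts by $(k\o f)\cdot m = f(m_{[1]})\,(k\tr m_{[0]})$ using the $H_2$-comodule structure, and the $K$-comodule structure records $m_{(-1)}\o(\text{functional from }m\o h''\mapsto\langle?,m\tl h''\rangle)$; the Long axioms \eqref{2b}, \eqref{2d} are exactly what guarantees the $K$-action and $K$-coaction interact correctly, while \eqref{2a}, \eqref{2c} become the single $(\a\o(\g^{-1})^*,\b\o(\d^{-1})^*)$-Yetter-Drinfeld condition over $K$. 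On morphisms $\Phi$ is the identity, and on the conjugation/braiding it should be the identity as well once the objects are identified. I would verify that $\Phi$ lands in the correct component, i.e. that it intertwines $*$ on $G$ with $*$ on $Aut_{Hopf}(K)\times Aut_{Hopf}(K)$ — this follows from the group homomorphism of the previous paragraph together with Proposition 2.2 and the analogous tensor-product formula in $\mathcal{YD}(K)$.

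Then I would check $\Phi$ is a strict tensor functor compatible with all the $T$-category data: (i) $\Phi(M\o N)=\Phi(M)\o\Phi(N)$ with matching structures — this is a direct comparison of the formulas in Proposition 2.2 with those defining $\o$ in $\mathcal{YD}(K)$, using finite-dimensionality of $H_2$ to identify $(H_2\o H_2)^*$ with $H_2^*\o H_2^*$; (ii) $\Phi$ commutes with conjugation, i.e. $\Phi\big({}^{(\a,\b,\g,\d)}N\big)={}^{\Phi(\a,\b,\g,\d)}\Phi(N)$, which is Proposition 2.4 compared to its single-Hopf-algebra analogue; (iii) $\Phi$ preserves the braiding, i.e. $\Phi(c_{M,N})=c_{\Phi M,\Phi N}$, which is Proposition 2.6 versus the Yetter-Drinfeld braiding over $K$ — again finite-dimensionality lets the pairing with $H_2^*$ convert the $[0],[1]$ legs of $c_{M,N}$ into the comodule legs of the YD braiding, and the automorphism twists $\b_1^{-1}$, $\d_1^{-1}$ match on both sides by construction of the embedding $G\hookrightarrow Aut_{Hopf}(K)\times Aut_{Hopf}(K)$. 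Finally, an inverse functor $\Psi$ is produced the same way (dualize the $H_2^*$-comodule structure back to an $H_2$-module structure and split the $K$-module/comodule structure into the $H_1$- and $H_2$-parts), and $\Phi\Psi$, $\Psi\Phi$ are the identity on objects and morphisms because all the conversions are mutually inverse bijections; one then invokes Theorem 2.7 for both categories to conclude the equivalence is an equivalence of braided $T$-categories.

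I expect the main obstacle to be purely bookkeeping rather than conceptual: namely, verifying that the embedding $(\a,\b,\g,\d)\mapsto(\a\o(\g^{-1})^*,\b\o(\d^{-1})^*)$ is a group homomorphism for the \emph{twisted} products on $G_1$ and $G_2$, and then tracking all four automorphisms through the dualization in the braiding identity (iii), where the order-reversal inherent in $(\,\cdot\,)^*$ must be reconciled with the asymmetric conjugation formulas $\g\d^{-1}\d_2\g_2^{-1}\d_1\g_2\g^{-1}$ appearing in Proposition 2.4 and with the $\d_1^{-1}$ in $c_{M,N}$. Getting these twists to line up on the nose is the crux; once the group homomorphism is established, everything else reduces to the finite-dimensional duality $\mathrm{Hom}_k(H_2,k)=H_2^*$ applied leg-by-leg to the formulas already proved in Propositions 2.2, 2.4 and 2.6.
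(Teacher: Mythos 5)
Your proposal is correct and follows essentially the same route as the paper: the paper's proof consists precisely of noting that $\alpha\mapsto\alpha^*$ gives an anti-isomorphism $Aut(H_2)\to Aut(H_2^*)$ (which you convert to a homomorphism by the inverse twist) and then invoking the componentwise procedure of \cite{Lu1} to transport the $\mathcal{LR}$-structures to Yetter--Drinfeld structures over $H_1\o H_2^*$. Your write-up is in fact considerably more detailed than the paper's two-line sketch, and correctly identifies the reconciliation of the twisted products on $G_1\oplus G_2$ with the product on $Aut_{Hopf}(H_1\o H_2^*)^{\times 2}$ as the only delicate point.
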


\begin{proof}
Let $H$ be a finite dimensional Hopf algebra, then
$$\varphi:Aut(H)\rightarrow Aut(H^*),\ \ \alpha\mapsto\varphi_\a:H^*\rightarrow H^*,\ \varphi_\a(f)(h)=f(\a(h)),$$
is an anti-isomorphism of group. By a similar procedure as in \cite{Lu1}, we could obtain this isomorphism.
\end{proof}

Now we consider the existence of left and right dualities.
\begin{proposition}
Let $M$ be a finite dimensional object in $\! _{H_1}\mathcal{LR}_{H_2}(\a,\b,\g,\d)$, then $M^*=Hom(M,k)$ becomes an object in $\! _{H_1}\mathcal{LR}_{H_2}(\a^{-1},\a^{-1}\b^{-1}\a,\g^{-1},\g\d^{-1}\g^{-1})$ with the following structures: for all $h'\in H_1,h''\in H_2,f\in M^*,m\in M$,
\begin{align*}
&(h'\tr f)(m)=f(S_1(h')\tr m),\\
&f_{(-1)}\o f_{(0)}(m)=\a^{-1}\b^{-1}S^{-1}_1(m_{(-1)})\o f(m_{(0)}),\\
&(f\tl h'')(m)=f(m\tl \d^{-1}\g^{-1}S^{-1}_2(h'')),\\
&f_{[0]}(m)\o f_{[1]}=f(m_{[0]})\o S_2(m_{[1]}).
\end{align*}
Moreover $M^*$ is the left duality of $M$.
\end{proposition}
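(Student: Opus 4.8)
The plan is to verify the claim in two stages: first that the four proposed structures make $M^*$ an object of $_{H_1}\mathcal{LR}_{H_2}(\a^{-1},\a^{-1}\b^{-1}\a,\g^{-1},\g\d^{-1}\g^{-1})$, and second that the standard evaluation and coevaluation maps exhibit $M^*$ as a left dual of $M$. For the first stage, the $H_1$-$H_2$-bimodule and bicomodule axioms on $M^*$ are routine: left $H_1$-action via $S_1$ and right $H_2$-action via $\d^{-1}\g^{-1}S_2^{-1}$ are the usual contragredient constructions (anti-homomorphism composed with automorphism), and the left $H_1$-coaction and right $H_2$-coaction are dual to the $M$-actions twisted by the relevant automorphisms; finite-dimensionality of $M$ is what makes these coactions well-defined. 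I would then check the four compatibility conditions (\ref{2a})--(\ref{2d}) for $M^*$. The cleanest route is to use the equivalent forms: verify (\ref{2e}) and (\ref{2f}) rather than (\ref{2a}) and (\ref{2c}), since the contragredient of a Yetter-Drinfeld module is most naturally checked in the ``$S$-twisted'' formulation. Concretely, for (\ref{2e}) on $M^*$ one evaluates both sides on an arbitrary $m\in M$, pushes everything onto $M$ using the definitions of $\tr$ and $\r_1$ on $M^*$, and reduces to the corresponding identity (\ref{2e}) for $M$ with $h'$ replaced by $S_1(h')$ and the automorphisms $\a^{-1},\a^{-1}\b^{-1}\a$ in place of $\a,\b$; the antipode axioms and the fact that $S_1$ is a coalgebra anti-homomorphism do the bookkeeping. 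The Long-module conditions (\ref{2b}) and (\ref{2d}) for $M^*$ follow similarly but more easily, since the left $H_1$-(co)action and right $H_2$-(co)action on $M^*$ are ``decoupled'' exactly as on $M$, and dualizing preserves this decoupling.

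For the second stage, I would take the evaluation map $\mathrm{ev}_M\colon M^*\o M\to k$, $f\o m\mapsto f(m)$, and the coevaluation $\mathrm{coev}_M\colon k\to M\o M^*$, $1\mapsto\sum_i e_i\o e^i$ for a basis $\{e_i\}$ of $M$ with dual basis $\{e^i\}$. The triangle identities $(\mathrm{ev}_M\o\,\mathrm{id}_{M^*})\circ(\mathrm{id}_{M^*}\o\,\mathrm{coev}_M)=\mathrm{id}_{M^*}$ and $(\mathrm{id}_M\o\,\mathrm{ev}_M)\circ(\mathrm{coev}_M\o\,\mathrm{id}_M)=\mathrm{id}_M$ are the usual ones for vector-space duals and hold automatically. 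The real content is checking that $\mathrm{ev}_M$ and $\mathrm{coev}_M$ are morphisms in $\mathcal{LR}(H_1,H_2)$, i.e. $H_1$-$H_2$-bilinear and bicolinear. For this one must first compute, using Proposition 2.2, the grading of $M^*\o M$ and of $M\o M^*$ and confirm that $\mathrm{ev}_M$ lands in the unit component $_{H_1}\mathcal{LR}_{H_2}(id,id,id,id)$ and $\mathrm{coev}_M$ originates there: this forces the duality grading $(\a^{-1},\a^{-1}\b^{-1}\a,\g^{-1},\g\d^{-1}\g^{-1})$ to be precisely the inverse of $(\a,\b,\g,\d)$ in $G=G_1\oplus G_2$, which is exactly what the formulas in the statement give (compare with the inverse formulas $(\a,\b)^{-1}=(\a^{-1},\a^{-1}\b^{-1}\a)$ and $(\g,\d)^{-1}=(\g^{-1},\g\d^{-1}\g^{-1})$ displayed before Theorem 2.6). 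Then linearity of $\mathrm{ev}_M$ amounts to identities such as $\sum f(S_1(h_1')\tr m)\,\?=\?\v_1(h')f(m)$ after unwinding the tensor-product action, which follow from the antipode and counit axioms; bicolinearity of $\mathrm{coev}_M$ is the dual statement and reduces to $\sum_i (e_i)_{(-1)}\o\cdots$ collapsing via the same axioms.

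The main obstacle I anticipate is purely organizational rather than conceptual: correctly tracking the automorphism twists through the tensor-product formulas of Proposition 2.2 when computing the grading of $M^*\o M$ and $M\o M^*$, and making sure the coaction on $M^*$ (which involves $\a^{-1}\b^{-1}S_1^{-1}$ on the $H_1$-side and $S_2$ on the $H_2$-side) interacts correctly with the tensor coaction so that $\mathrm{ev}_M$ is genuinely bicolinear into $k$. In particular one must be careful that $S_1$ is used with the right variance (the action on $M^*$ uses $S_1$, the coaction uses $\a^{-1}\b^{-1}S_1^{-1}$, and in a non-finite-dimensional setting $S$ need not be invertible — but here $M$ is finite-dimensional, and $S_1,S_2$ on $H_1,H_2$ are used only after composing with automorphisms, so this is consistent). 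Once the grading is confirmed to be the group inverse and the relevant (co)linearity identities are reduced to antipode/counit relations, the verification is mechanical, so I would present the grading computation and one representative (co)linearity check in detail and indicate that the remaining ones are analogous.
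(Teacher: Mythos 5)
Your plan matches the paper's proof: verify the four Yetter--Drinfeld--Long compatibility conditions for $M^*$ directly by evaluating on elements of $M$, then show the standard $\mathrm{ev}$ and $\mathrm{coev}$ are $H_1$-$H_2$-bilinear and bicolinear (the triangle identities being immediate), with the duality grading being the group inverse $(\a^{-1},\a^{-1}\b^{-1}\a,\g^{-1},\g\d^{-1}\g^{-1})$. The only cosmetic difference is that you propose checking the $S$-twisted forms (\ref{2e}), (\ref{2f}) while the paper checks (\ref{2a}), (\ref{2c}) directly; these are equivalent by Remark 2.2, so the argument is essentially the same.
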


\begin{proof}
For all $h'\in H_1,h''\in H_2,f\in M^*,m\in M$,
\begin{align*}
&\a^{-1}(h'_1)f_{(-1)}\o(h'_2\tr f_{(0)})(m)\\
&=\a^{-1}(h'_1)f_{(-1)}\o  f_{(0)}(S_1(h')\tr m)\\
&=\a^{-1}(h'_1)\a^{-1}\b^{-1}S^{-1}_1((S_1(h')\tr m)_{(-1)})\o f((S_1(h')\tr m)_{(0)})\\
&=\a^{-1}\b^{-1}S^{-1}_1(m_{(-1)})\a^{-1}\b^{-1}\a(h'_2)\o f(S_1(h'_1)\tr m_{(0)})\\
&=\a^{-1}\b^{-1}S^{-1}_1(m_{(-1)})\a^{-1}\b^{-1}\a(h'_2)\o (h'_1\tr f)(m_{(0)})\\
&=(h'_1\tr f)_{(-1)}\a^{-1}\b^{-1}\a(h'_2)\o (h'_1\tr f)_{(0)}(m),
\end{align*}
which proves
$$\a^{-1}(h'_1)f_{(-1)}\o(h'_2\tr f_{(0)})=(h'_1\tr f)_{(-1)}\a^{-1}\b^{-1}\a(h'_2)\o (h'_1\tr f)_{(0)}.$$
And
\begin{align*}
&(f_{[0]}\tl h''_1)(m)\o f_{[1]}\g\d\g^{-1}(h''_2)\\
&=f_{[0]}(m\tl\d^{-1}\g^{-1}S^{-1}_2(h''_1))\o f_{[1]}\g\d\g^{-1}(h''_2)\\
&=f((m\tl\d^{-1}\g^{-1}S^{-1}_2(h''_1))_{[0]})\o S_2((m\tl\d^{-1}\g^{-1}S^{-1}_2(h''_1))_{[1]})\g\d\g^{-1}(h''_2)\\
&=f(m_{[0]}\tl\d^{-1}\g^{-1}S^{-1}_2(h''_2)))\o S_2(\g\d^{-1}\g^{-1}(h''_3)m_{[1]}\g^{-1}S^{-1}_2(h''_1))\g\d^{-1}\g^{-1}(h''_4)\\
&=f(m_{[0]}\tl\d^{-1}\g^{-1}S^{-1}_2(h''_2)))\o \g^{-1}(h''_1)S_2(m_{[1]})\\
&=(f\tl h''_2)(m_{[0]})\o\g^{-1}(h''_1)S_2(m_{[1]})\\
&=(f\tl h''_2)_{[0]}(m)\o\g^{-1}(h''_1)(f\tl h''_2)_{[1]},
\end{align*}
which proves
$$(f_{[0]}\tl h''_1)\o f_{[1]}\g\d\g^{-1}(h''_2)=(f\tl h''_2)_{[0]}\o\g^{-1}(h''_1)(f\tl h''_2)_{[1]}.$$
Also
\begin{align*}
&(h'\tr f)_{[0]}(m)\o (h'\tr f)_{[1]}=(h'\tr f)(m_{[0]})\o S_2(m_{[1]})\\
&=f(S_1(h')\tr m_{[0]})\o S_2(m_{[1]})=f((S_1(h')\tr m)_{[0]})\o S_2((S_1(h')\tr m)_{[1]})\\
&=f_{[0]}(S_1(h')\tr m)\o f_{[1]}=(h'\tr f_{[0]})(m)\o f_{[1]},
\end{align*}
and
\begin{align*}
&(f\tl h'')_{(-1)}\o(f\tl h'')_{(0)}(m)=\a^{-1}\b^{-1}S^{-1}_1(m_{(-1)})\o (f\tl h'')(m_{(0)})\\
&=\a^{-1}\b^{-1}S^{-1}_1(m_{(-1)})\o f(m_{(0)}\tl \d^{-1}\g^{-1}S^{-1}_2(h''))\\
&=\a^{-1}\b^{-1}S^{-1}_1((m\tl \d^{-1}\g^{-1}S^{-1}_2(h''))_{(-1)})\o f((m\tl \d^{-1}\g^{-1}S^{-1}_2(h''))_{(0)})\\
&=f_{(-1)}\o f_{(0)}(m\tl \d^{-1}\g^{-1}S^{-1}_2(h''))=f_{(-1)}\o (f_{(0)}\tl h'')(m).
\end{align*}
Therefore $M^*$ is an object in $\! _{H_1}\mathcal{LR}_{H_2}(\a^{-1},\a^{-1}\b^{-1}\a,\g^{-1},\g\d^{-1}\g^{-1})$.
Define the value map $ev$ and covalue map $coev$, respectively as follows:
\begin{align*}
&ev:M^*\o M\rightarrow k,\ \ f\o m\mapsto f(m),\\
&coev:k\rightarrow M\o M^*,\ \ 1\mapsto \sum_i m_i\o m^i,
\end{align*}
where $\{m_i\}$ and $\{m^i\}$ are dual bases in $M$ and $M^*$. We prove that $ev$ and $coev$ are $H_1$-$H_2$-bilinear. Indeed
\begin{align*}
&ev(h'\tr(f\o m)\tl h'')=ev(h'_1\tr f\tl \g(h''_1)\o h'_2\tr m\tl \d^{-1}(h''_2))\\
&=(h'_1\tr f\tl \g(h''_1))(h'_2\tr m\tl \d^{-1}(h''_2))\\
&=\varepsilon(h')\varepsilon(h'')f(m),
\end{align*}
and
\begin{align*}
&h'\tr(\sum_i m_i\o m^i)\tl h''\\
&=\sum_i h'_1\tr m_i\tl \g^{-1}(h''_1)\o h'_2\tr m^i\tl \g\d\g^{-1}(h''_2).
\end{align*}
For all $m\in M$,
\begin{align*}
&\sum_i h'_1\tr m_i\tl \g^{-1}(h''_1)\o (h'_2\tr m^i\tl \g\d\g^{-1}(h''_2))(m)\\
&=\sum_i h'_1\tr m_i\tl \g^{-1}(h''_1)\o m^i(S_1(h'_2)\tr m\tl \g^{-1}S^{-1}_2(h''_2))\\
&=\varepsilon(h')\varepsilon(h'')m,
\end{align*}
which implies $coev(h'\tr 1\tl h'')=h'\tr coev(1)\tl h''$.

Next we prove that $ev$ and $coev$ are $H_1$-$H_2$-bicolinear.
\begin{align*}
&(f\o m)_{(-1)}\o ev((f\o m)_{(0)})\\
&=\a(f_{(-1)})\b^{-1}(m_{(-1)})\o f_{(0)}(m_{(0)})\\
&=\b^{-1}S^{-1}_1(m_{(0)(-1)})\b^{-1}(m_{(-1)})\o f(m_{(0)(0)})\\
&=f(m)1\o 1=(ev(f\o m))_{-1}\o (ev(f\o m))_{0},
\end{align*}
\begin{align*}
&(\sum_i m_i\o m^i)_{(-1)}\o (\sum_i m_i\o m^i)_{(0)}(m)\\
&=\sum_i \a^{-1}(m_{i(-1)})\o \a^{-1}\b\a(m^i_{(-1)})\o m_{i(0)}\o m^i_{(0)}(m)\\
&=\sum_i \a^{-1}(m_{i(-1)})\a^{-1}S^{-1}_1(m_{(-1)})\o m_{i(0)}\o m^i(m_{(0)})\\
&=1\o m=(coev(1)\o 1)(m).
\end{align*}
It is straightforward to check that $coev$ is right $H_2$-colinear. Finally it is trivial to verify the identities
\begin{align*}
&(id_M\o ev)(coev\o id_M)=id_M,\\
&(ev\o id_{M^*})(id_{M^*}\o coev)=id_{M^*}.
\end{align*}
Therefore $M^*$ is the left duality of $M$. The proof is completed.
\end{proof}

Similarly we have the following result.
\begin{proposition}
Let $M$ be a finite dimensional object in $\! _{H_1}\mathcal{LR}_{H_2}(\a,\b,\g,\d)$, then $^*M=Hom(M,k)$ becomes an object in $\! _{H_1}\mathcal{LR}_{H_2}(\a^{-1},\a^{-1}\b^{-1}\a,\g^{-1},\g\d^{-1}\g^{-1})$ with the following structures: for all $h'\in H_1,h''\in H_2,f\in\! ^*M,m\in M$,
\begin{align*}
&(h'\tr f)(m)=f(S^{-1}_1(h')\tr m),\\
&f_{(-1)}\o f_{(0)}(m)=\a^{-1}\b^{-1}S_1(m_{(-1)})\o f(m_{(0)}),\\
&(f\tl h'')(m)=f(m\tl \d^{-1}\g^{-1}S_2(h'')),\\
&f_{[0]}(m)\o f_{[1]}=f(m_{[0]})\o S^{-1}_2(m_{[1]}).
\end{align*}
Moreover $^*M$ is the right duality of $M$.
\end{proposition}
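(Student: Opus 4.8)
The plan is to mirror exactly the strategy used for Proposition 2.10 (the left duality case), making the systematic substitutions $S_1\mapsto S_1^{-1}$, $S_2\mapsto S_2^{-1}$ in the structure maps and reversing the roles of $ev$ and $coev$ in the duality axioms. Concretely, the work splits into two parts: first verify that $^*M$ with the stated action/coaction is an object of $_{H_1}\mathcal{LR}_{H_2}(\a^{-1},\a^{-1}\b^{-1}\a,\g^{-1},\g\d^{-1}\g^{-1})$, and second verify that the evaluation and coevaluation maps exhibit $^*M$ as a \emph{right} dual, i.e. that they satisfy $(ev\o id_M)(id_M\o coev)=id_M$ and $(id_{{}^*M}\o ev)(coev\o id_{{}^*M})=id_{{}^*M}$ (the mirror of the two identities at the end of Proposition 2.10's proof).

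For the first part I would check the four Yetter-Drinfeld/Long compatibilities (\ref{2a})--(\ref{2d}) for $^*M$ one at a time, by the same dualization trick: pair each proposed identity in $^*M$ with an arbitrary $m\in M$, push the $S_1^{-1}$ or $S_2^{-1}$ onto $M$ using the defining formulas, and then invoke the corresponding compatibility \emph{for $M$} together with the antipode identities. The only genuine difference from Proposition 2.10 is that $S_1$ is replaced by $S_1^{-1}$ in the $H_1$-action and in the coaction on $^*M$, and $S_2$ by $S_2^{-1}$ in the $H_2$-action and coaction; since $H_1,H_2$ have bijective antipodes (implicit throughout, and needed already for $c_{M,N}^{-1}$ in Proposition 2.6), the relations $S^{-1}_i(xy)=S^{-1}_i(y)S^{-1}_i(x)$ and $\sum S^{-1}(x_2)x_1=\v(x)1=\sum x_2 S^{-1}(x_1)$ are available and do all the bookkeeping. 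The automorphism twists work out to the same tuple $(\a^{-1},\a^{-1}\b^{-1}\a,\g^{-1},\g\d^{-1}\g^{-1})$ because the $S_i^{\pm1}$ never interact with the $\a,\b,\g,\d$ in a way that changes which automorphism appears, only the side on which the antipode sits.

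For the second part, define $ev:{}^*M\o M\to k$, $f\o m\mapsto f(m)$ and $coev:k\to M\o{}^*M$, $1\mapsto\sum_i m_i\o m^i$ with $\{m_i\},\{m^i\}$ dual bases, exactly as before; the $H_1$-$H_2$-(co)linearity checks are formally identical to those in Proposition 2.10 after the $S_i\mapsto S_i^{-1}$ substitution, and the triangle identities $(ev\o id_M)(id_M\o coev)=id_M$ and $(id_{{}^*M}\o ev)(coev\o id_{{}^*M})=id_{{}^*M}$ are the standard finite-dimensional dual-basis computations, unchanged by the twists. I expect no real obstacle here: every step has a template in the preceding proof. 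If anything is delicate it is merely clerical---keeping straight that for a \emph{right} dual one needs $S^{-1}$ where the left dual used $S$, so that the bilinearity of $ev$ and $coev$ comes out with the correct $\v$'s; but this is exactly the kind of routine verification the paper elsewhere dispatches with ``it is straightforward,'' and I would do likewise, writing out only the coaction-compatibility computation in full and indicating the rest by analogy with Proposition 2.10.
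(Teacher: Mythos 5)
Your proposal follows exactly the route the paper intends: the paper gives no argument for this proposition at all beyond the words ``Similarly we have the following result,'' and your plan --- repeat the proof of the left-duality proposition with $S_1,S_2$ replaced by $S_1^{-1},S_2^{-1}$ and the duality axioms mirrored --- is precisely that similarity made explicit. The dualization trick (pair with $m\in M$, push the antipode onto $M$, invoke the compatibilities for $M$) is the right engine, and your observation that the target component $(\a^{-1},\a^{-1}\b^{-1}\a,\g^{-1},\g\d^{-1}\g^{-1})$ is unchanged is correct.

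One slip to fix before writing it out: you keep $ev:{}^*M\o M\to k$ and $coev:k\to M\o{}^*M$ ``exactly as before,'' but for a \emph{right} dual the structure maps must go the other way, $ev:M\o{}^*M\to k$ and $coev:k\to{}^*M\o M$; with the domains you wrote, the triangle identities $(ev\o id_M)(id_M\o coev)=id_M$ and $(id_{{}^*M}\o ev)(coev\o id_{{}^*M})=id_{{}^*M}$ do not even compose. This is not purely clerical: the $H_1$-$H_2$-(co)linearity of $ev$ and $coev$ is imposed with respect to the twisted tensor-product structures on $M\o{}^*M$ and ${}^*M\o M$ respectively, and it is exactly this change of order that forces $S^{-1}$ in place of $S$ in the action and coaction on ${}^*M$. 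Once the directions are corrected, the rest of your outline goes through as stated.
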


Let $\mathcal{LR}(H_1, H_2)_{fd}$ be the subcategory of $\mathcal{LR}(H_1, H_2)$ consisting of all finite dimensional objects. By the above two results, we have
\begin{theorem}
$\mathcal{LR}(H_1, H_2)_{fd}$ is a rigid braid $T$-category over $G$.
\end{theorem}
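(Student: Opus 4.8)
The goal is to show that $\mathcal{LR}(H_1,H_2)_{fd}$ is a rigid braided $T$-category over $G$. Since Theorem 2.9 already establishes that $\mathcal{LR}(H_1,H_2)$ is a braided $T$-category over $G$, and rigidity concerns only the existence of left and right dualities, the plan is to assemble the pieces that are already in place: restrict the braided $T$-category structure to the full subcategory $\mathcal{LR}(H_1,H_2)_{fd}$ of finite dimensional objects, and then invoke Propositions 2.11 and 2.12 to supply the dualities.

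First I would check that $\mathcal{LR}(H_1,H_2)_{fd}$ is closed under the relevant operations, so that it inherits the braided $T$-category structure rather than merely being an abstract subcategory. Concretely: (i) the tensor product of two finite dimensional objects is finite dimensional, so by Proposition 2.2 the tensor product of Proposition 2.2 restricts to $\mathcal{LR}(H_1,H_2)_{fd}$, and the unit $k$ is finite dimensional; (ii) for any $(\a_1,\b_1,\g_1,\d_1)\in G$ and any finite dimensional $N$, the object $^{(\a_1,\b_1,\g_1,\d_1)}N$ coincides with $N$ as a vector space (Proposition 2.4), hence is finite dimensional, so the conjugation functors $\varphi_{(\a_1,\b_1,\g_1,\d_1)}$ preserve $\mathcal{LR}(H_1,H_2)_{fd}$; (iii) the braiding morphisms $c_{M,N}$ of Proposition 2.7 are morphisms between finite dimensional objects when $M,N$ are finite dimensional, so the braiding restricts as well. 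Therefore $\mathcal{LR}(H_1,H_2)_{fd}$ is a braided $T$-category over $G$ in its own right, with structure inherited from $\mathcal{LR}(H_1,H_2)$.

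Next I would supply the dualities. Given a finite dimensional $M\in\! _{H_1}\mathcal{LR}_{H_2}(\a,\b,\g,\d)$, Proposition 2.11 gives $M^*\in\! _{H_1}\mathcal{LR}_{H_2}(\a^{-1},\a^{-1}\b^{-1}\a,\g^{-1},\g\d^{-1}\g^{-1})$ together with evaluation and coevaluation morphisms satisfying the triangle identities, exhibiting $M^*$ as a left dual of $M$; note that $(\a^{-1},\a^{-1}\b^{-1}\a,\g^{-1},\g\d^{-1}\g^{-1})=(\a,\b,\g,\d)^{-1}$ in $G$ by the formulas for the inverses in $G_1$ and $G_2$, which is exactly the index required for a dual object in a $T$-category. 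Dually, Proposition 2.12 gives $^*M$ in the same component $_{H_1}\mathcal{LR}_{H_2}((\a,\b,\g,\d)^{-1})$ as a right dual of $M$, with its own evaluation and coevaluation satisfying the triangle identities. Since every object of $\mathcal{LR}(H_1,H_2)_{fd}$ admits both a left and a right duality lying again in $\mathcal{LR}(H_1,H_2)_{fd}$, the category is rigid.

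The only genuinely new point to verify — and the step I expect to require the most care — is compatibility of the dualities with the $T$-category grading and conjugation: one must confirm that the component of $M^*$ (resp. $^*M$) is precisely the inverse, in $G$, of the component of $M$, which I would do by a direct comparison of $(\a^{-1},\a^{-1}\b^{-1}\a)$ with the formula $(\a,\b)^{-1}=(\a^{-1},\a^{-1}\b^{-1}\a)$ in $G_1$ and of $(\g^{-1},\g\d^{-1}\g^{-1})$ with $(\g,\d)^{-1}=(\g^{-1},\g\d^{-1}\g^{-1})$ in $G_2$; these match on the nose. One should also note that the conjugation functors, being invertible strict tensor functors, automatically preserve dualities, so no further compatibility with $\varphi$ needs checking. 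With these observations, combining Theorem 2.9 with Propositions 2.11 and 2.12 completes the proof.
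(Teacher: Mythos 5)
Your proposal is correct and follows essentially the same route as the paper, which simply combines Theorem 2.9 with Propositions 2.11 and 2.12; your additional checks (closure of the finite dimensional subcategory under tensor product, conjugation and braiding, and that the component of $M^*$ and $^*M$ is the inverse of that of $M$ in $G$) are exactly the routine verifications the paper leaves implicit.
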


\section{The case of group algebras}
\def\theequation{3.\arabic{equation}}
\setcounter{equation} {0}

The aim of this section is to describe $\mathcal{LR}(H_1, H_2)$ if $H_1$ and $H_2$ are the group algebras $k[G_1]$ and $k[G_2]$ of the groups $G_1$ and $G_2$, respectively.

Let $\a,\b\in Aut(G_1),\g,\d\in Aut(G_2)$. An $(\a,\b,\g,\d)$-Yetter-Drinfeld-Long bimodule $M$ over $k[G_1]$ and $k[G_2]$ is a $G_1$-$G_2$-bimodule with a decomposition
$$M=\bigoplus_{g\in G_1,h\in G_2}\! _g M_h,$$
such that for $g,g'\in G_1,h,h'\in G_2,$ we have $g'\tr\! _{g} M_h \tl h'\subseteq\! _{\a(g')g\b(g'^{-1})} M_{\g(h'^{-1})h\d(h')} $.

If $\a_1,\b_1,\a_2,\b_2\in Aut(G_1),\g_1,\d_1,\g_2,\d_2\in Aut(G_2),$
$$M=\bigoplus_{g\in G_1,h\in G_2}\! _g M_h\in\! _{H_1}\mathcal{LR}_{H_2}(\a_1,\b_1,\g_1,\d_1),$$
and
$$N=\bigoplus_{g\in G_1,h\in G_2}\! _g N_h\in\! _{H_1}\mathcal{LR}_{H_2}(\a_2,\b_2,\g_2,\d_2).$$
Then
$$M\o N\in\! _{H_1}\mathcal{LR}_{H_2}(\a_2\a_1,\a_2\b_1\a^{-1}_2\b_2,\g_1\g_2,\d_2\g^{-1}_2\d_1\g_2),$$
with the actions
\begin{align*}
&g\tr(m\o n)=g\tr m\o g\tr n,\\
&(m\o n)\tl h=m\tl \g_2(h)\o n\tl\g^{-1}_2\d_1\g_2(h),
\end{align*}
and decompositions
$$M\o N=\bigoplus_{c\in G_1,d\in G_2}\Big(\bigoplus_{\a^{-1}_2(g_1)\a_2\b^{-1}_1\a^{-1}_2(g_2)=c,\ h_1h_2=d}\! _{g_1}M_{h_1}\o\!   _{g_2} N_{h_2}\Big).$$
Let $^{(\a_1,\b_1,\g_1,\d_1)}N=N$ as vector space with actions
$$g\rightharpoonup n=\b^{-1}_1\a_1(g)\tr n,\ \ n\leftharpoonup h=n\tl \g^{-1}_2\d_1\g_2\g^{-1}_1(h),$$
for all $n\in N,g\in G_1,h\in G_2$, and decompositions
$$N=\bigoplus\limits_{g\in G_1,h\in G_2}\! _{\a_2\b^{-1}_1\a^{-1}_2\a_1(g)} N_{\d_1\g^{-1}_1(h)}.$$
The braiding $c_{M,N}:M\o N\rightarrow\! ^MN\o M$ acts on homogeneous elements $m\in\! _{g_1}M_{h_1},n\in\! _{g_2} N_{h_2}$ as $c_{M,N}(m\o n)=\b^{-1}_1(g_1)\tr n\o m\tl \d^{-1}_1(h_2)$. Hence it sends $_{g_1}M_{h_1}\o\! _{g_2} N_{h_2}$ to $_{\a_2\b^{-1}_1(g_1)g_2\b_2\b^{-1}_1(g^{-1}_1)}N_{h_2}\o\! _{g_1}M_{\g_1\d^{-1}_1(h^{-1}_2)h_1h_2}$.

Let $M=\bigoplus_{g\in G_1,h\in G_2}\! _g M_h\in\! _{H_1}\mathcal{LR}_{H_2}(\a,\b,\g,\d)$ be finite dimensional. Since the antipode $S=S^{-1}$ for $k[G_1]$ and $k[G_2]$, we have $M^*=\! ^*M$, and its structure could be described as follows: for all $g\in G_1,m\in M,f\in M^*,$
\begin{align*}
&(g\tr f)(m)=f(g^{-1}\tr m),\\
&(f\tl h)(m)=f(m\tl \d^{-1}\g^{-1}(h^{-1})),\\
&\hbox{The decomposition is}\ M^*=\bigoplus_{g\in G_1,h\in G_2}\! (_{\b\a(g^{-1})}M_{h^{-1}})^*
\end{align*}

\section{An isomorphism of categories  $_{H_1}\mathcal{LR}_{H_2}(\a,\b,\g,\d)\simeq\! _{H_1}\mathcal{LR}_{H_2}$}
\def\theequation{4.\arabic{equation}}
\setcounter{equation} {0}

\begin{theorem}
Let $\a,\b\in Aut(H_1),\g,\d\in Aut(H_2)$, and assume that there exists a quadruple $(f_1,g_1,f_2,g_2)$ in involution corresponding to $(\a,\b,\g,\d)$. Then the categories  $_{H_1}\mathcal{LR}_{H_2}(\a,\b,\g,\d)$ and $ _{H_1}\mathcal{LR}_{H_2}$ are isomorphic.

A pair of inverse functors $(F,G)$ is given as follows. If $M\in\! _{H_1}\mathcal{LR}_{H_2}(\a,\b,\g,\d)$, then $F(M)\in\! _{H_1}\mathcal{LR}_{H_2}$, where $F(M)=M$ as vector space, with structures
\begin{align*}
&h'\rightarrow m=f_1(\b^{-1}S(h_1))\b^{-1}(h_2)\tr m,\ m_{\<-1\>}\o m_{\<0\>}=g_1m_{(-1)}\o m_{(0)},\\
&m\leftarrow h''=m\tl \g^{-1}(h''_1)f_2(\g^{-1}S(h''_2)),\ m_{\{0\}}\o m_{\{1\}}=m_{[0]}\o m_{[1]}g^{-1}_2.
\end{align*}
If $N\in\! _{H_1}\mathcal{LR}_{H_2}$, then $G(N)\in\! _{H_1}\mathcal{LR}_{H_2}(\a,\b,\g,\d)$, where $G(N) = N$ as vector space,
with structures
\begin{align*}
&h'\tr n=f_1(h'_1)\b(h'_2)\rightarrow n,\ n_{(-1)}\o n_{(0)}=g_1n_{\<-1\>}\o n_{\<0\>},\\
&n\tl h''=n\leftarrow\g(h''_1)f_2(h''_2),\ n_{[0]}\o n_{[1]}=n_{\{0\}}\o n_{\{1\}}g_2.
\end{align*}
Both $F$ and $G$ act as identities on morphisms.
\end{theorem}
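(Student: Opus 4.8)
The plan is to verify that $F$ and $G$ are well-defined functors landing in the correct categories, that they are mutually inverse, and that they commute with the relevant structure so as to give an isomorphism of categories (not merely an equivalence). Since both functors act as the identity on underlying vector spaces and on morphisms, the real content is entirely in checking compatibility of the module/comodule structures with the Yetter-Drinfeld and Long conditions; the functoriality (respecting composition and identities) is then immediate. First I would observe that, by the defining relations of a quadruple in involution, the maps $h'\mapsto f_1(h'_1)\b(h'_2)$ and $h''\mapsto\g(h''_1)f_2(h''_2)$ are algebra endomorphisms of $H_1$ and $H_2$ respectively (their inverses being $h'\mapsto f_1(\b^{-1}S(h'_1))\b^{-1}(h'_2)$ and $h''\mapsto\g^{-1}(h''_1)f_2(\g^{-1}S(h''_2))$, using that $f_i$ is an algebra map and $S$ an anti-coalgebra map). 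This makes clear that $h'\rightarrow m$ and $h'\tr n$ define genuine $H_1$-module structures, and similarly for the right $H_2$-actions; the comodule structures are only twisted by the grouplikes $g_1,g_2^{\pm1}$, which are invertible, so those are unproblematic.

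Next I would check that $F(M)$ actually satisfies the four axioms \eqref{2a}--\eqref{2d} with all automorphisms equal to the identity, assuming $M$ satisfies them for $(\a,\b,\g,\d)$. The key computational inputs are the two identities defining the quadruple, namely $\a(h')=g_1f_1(h'_1)\b(h'_2)f_1(S(h'_3))g_1^{-1}$ and $\d(h'')=g_2^{-1}f_2(S(h''_1))\g(h''_2)f_2(h''_3)g_2$. For instance, to verify the left-right Yetter-Drinfeld condition for $F(M)$ one expands $(h'\rightarrow m)_{\<-1\>}\o(h'\rightarrow m)_{\<0\>}=g_1(f_1(\b^{-1}S(h'_1))\b^{-1}(h'_2)\tr m)_{(-1)}\o(\cdots)_{(0)}$, applies \eqref{2e} for $M$, and then uses the quadruple identity (in the form expressing $\a$ through $\b$, $f_1$ and $g_1$) to collapse everything to the untwisted Yetter-Drinfeld relation; the grouplike $g_1$ and the counit-like scalars $f_1$ recombine exactly because $g_1$ is grouplike and $f_1$ is an algebra map. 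The Long conditions \eqref{2b} and \eqref{2d} are easier since the $\r_1$-action and $\r_2$-coaction interact trivially; the right-left $(\g,\d)$-condition \eqref{2c} for $F(M)$ is handled symmetrically using the second quadruple identity. The verification that $G(N)\in\! _{H_1}\mathcal{LR}_{H_2}(\a,\b,\g,\d)$ is the same computation run in reverse.

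Finally I would check $F\circ G=\mathrm{id}$ and $G\circ F=\mathrm{id}$ on the nose. On underlying spaces and morphisms this is trivial; on structure maps it reduces to the identities $f_1(h'_1)\b(f_1(\b^{-1}S(h'_2))\b^{-1}(h'_3))=h'$ together with $g_1\cdot(g_1^{-1}\text{-untwisting})=\mathrm{id}$ on comodules, and their $H_2$-counterparts — each of which follows from the antipode axiom, $f_i$ being an algebra map, and $g_i$ being grouplike. I expect the main obstacle to be purely bookkeeping: correctly tracking the Sweedler indices through the $S$'s and the convolution with $f_i$ in the Yetter-Drinfeld axioms, and making sure the twist by $g_1$ on one comodule side and by $g_2^{-1}$ on the other is inserted with the correct handedness so that the equivalences \eqref{2e}, \eqref{2f} (rather than the raw forms \eqref{2a}, \eqref{2c}) are what actually gets used. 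There is no conceptual difficulty beyond that; once the algebra-map and grouplike properties are in hand, every required identity is a short Hopf-algebra manipulation, and since nothing is lost or approximated, the functors are strict inverses, giving an isomorphism of categories as claimed.
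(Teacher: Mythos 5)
Your proposal is correct and follows essentially the same route as the paper, whose entire proof of this theorem is the single sentence ``The proof can be obtained by direct computations.'' Your outline (well-definedness of the twisted structures via the algebra-map/grouplike properties, verification of the axioms using the two quadruple identities, and strict invertibility of $F$ and $G$ via the antipode axiom) is exactly the intended direct verification, and in fact supplies more detail than the paper does.
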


\begin{proof}
The proof can be obtained by direct computations.
\end{proof}

\begin{corollary}
For any $\a\in Aut(H_1),\g\in Aut(H_2),$ we have
$$_{H_1}\mathcal{LR}_{H_2}(\a,\a,\g,\g)\simeq\! _{H_1}\mathcal{LR}_{H_2}.$$
\end{corollary}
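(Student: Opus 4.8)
The plan is to derive the corollary as a direct application of Theorem~4.1, so the only real work is to exhibit a quadruple $(f_1,g_1,f_2,g_2)$ in involution corresponding to $(\a,\a,\g,\g)$. Recall that a quadruple in involution corresponding to $(\a,\b,\g,\d)$ consists of algebra maps $f_1:H_1\to k$, $f_2:H_2\to k$ and group-like elements $g_1\in H_1$, $g_2\in H_2$ satisfying
\begin{align*}
&\a(h')=g_1f_1(h'_1)\b(h'_2)f_1(S(h'_3))g^{-1}_1,\\
&\d(h'')=g^{-1}_2f_2(S(h''_1))\g(h''_2)f_2(h''_3)g_2,
\end{align*}
for all $h'\in H_1$, $h''\in H_2$. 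Once such a quadruple is produced for the parameters $(\a,\a,\g,\g)$, Theorem~4.1 immediately gives $_{H_1}\mathcal{LR}_{H_2}(\a,\a,\g,\g)\simeq\! _{H_1}\mathcal{LR}_{H_2}$.

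First I would take the obvious candidate: set $f_1=\v_1$ (the counit of $H_1$), $f_2=\v_2$ (the counit of $H_2$), $g_1=1_{H_1}$ and $g_2=1_{H_2}$. The unit elements are group-like and the counits are algebra maps, so these are legitimate ingredients. Next I would check the two defining identities. With $\b=\a$, $g_1=1$ and $f_1=\v_1$, the right-hand side of the first identity becomes
$$1\cdot\v_1(h'_1)\,\a(h'_2)\,\v_1(S(h'_3))\cdot 1=\a\big(\v_1(h'_1)h'_2\v_1(h'_3)\big)=\a(h'),$$
using counitality twice (and $\v_1\circ S=\v_1$). Similarly, with $\d=\g$, $g_2=1$ and $f_2=\v_2$, the right-hand side of the second identity is
$$1\cdot\v_2(S(h''_1))\,\g(h''_2)\,\v_2(h''_3)\cdot 1=\g\big(\v_2(h''_1)h''_2\v_2(h''_3)\big)=\g(h''),$$
again by counitality and $\v_2\circ S=\v_2$. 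Hence $(\v_1,1_{H_1},\v_2,1_{H_2})$ is a quadruple in involution corresponding to $(\a,\a,\g,\g)$.

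There is essentially no obstacle here: the argument is a one-line verification once the right quadruple is guessed, and the natural guess works precisely because the condition $\b=\a$, $\d=\g$ kills the twisting that a nontrivial quadruple would otherwise have to absorb. The only point requiring a moment's care is remembering that the counit is invariant under the antipode, i.e. $\v_i\circ S_i=\v_i$, which holds in any Hopf algebra. Applying Theorem~4.1 with this quadruple then yields the stated isomorphism, completing the proof. (One may also note for the reader that under this quadruple the functors $F$ and $G$ of Theorem~4.1 specialize to $h'\rightarrow m=\a^{-1}(h')\tr m$, $m_{\<-1\>}\o m_{\<0\>}=m_{(-1)}\o m_{(0)}$, $m\leftarrow h''=m\tl\g^{-1}(h'')$, $m_{\{0\}}\o m_{\{1\}}=m_{[0]}\o m_{[1]}$ and their inverses, but this explicit description is not needed for the proof.)
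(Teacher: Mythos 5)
Your proof is correct and follows exactly the paper's route: the paper likewise observes that $(\varepsilon_1,1,\varepsilon_2,1)$ is a quadruple in involution corresponding to $(\a,\a,\g,\g)$ and then invokes Theorem~4.1. The only difference is that you spell out the (one-line) verification that the paper leaves implicit.
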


\begin{proof}
Since $(\varepsilon_1, 1,\varepsilon_2,1)$ is a quadruple in involution corresponding to $(\a,\a,\g,\g)$, the isomorphism can be obtained by Theorem 4.1.
\end{proof}

\section*{Acknowledgement}
This work was supported by the NSF of China (No. 11871301, 11901240) and the NSF of Shandong Province (No. ZR2018PA006).

\end{document}